\numberwithin{equation}{section}
\newtheorem{theorem}{Theorem}[section]
\newtheorem{lemma}[theorem]{Lemma}
\newtheorem{proposition}[theorem]{Proposition}
\theoremstyle{definition}
\newtheorem{remark}[theorem]{Remark}
\newcommand{\E}{\mathbb{E}}
\newcommand{\N}{\mathbb{N}}
\renewcommand{\P}{\mathbb{P}}
\newcommand{\R}{\mathbb{R}}
\newcommand{\Z}{\mathbb{Z}}
\newcommand{\T}{\mathbb{T}}
\def\bs{\boldsymbol}
\newcommand\bP{\ensuremath{\bs{\mathrm{P}}}}
\newcommand{\cC}{{\ensuremath{\mathcal C}} }
\newcommand{\cH}{{\ensuremath{\mathcal H}} }
\newcommand{\cT}{{\ensuremath{\mathcal T}} }
\newcommand{\be}{\begin{equation}}
	\newcommand{\ee}{\end{equation}}
\definecolor{darkorange}{RGB}{255, 100, 0}
\newcommand{\e}{{\rm e}}
\newcommand{\ind}{\mathds{1}}
\newcommand{\eps}{\varepsilon}
\renewcommand{\eps}{\varepsilon}
\renewcommand{\theta}{\vartheta}
\renewcommand{\rho}{\varrho}
\newcommand{\tmix}{\ensuremath{t_{\text{\normalfont mix}}}}
\newcommand\xleftrightarrow[2][]{%
\ext@arrow 9999{\longleftrightarrowfill@}{#1}{#2}}
\newcommand\longleftrightarrowfill@{%
\arrowfill@\leftarrow\relbar\rightarrow}
\begin{document}
\date{\today}

\title[Diameter of UST on Random Weighted Graphs]{Diameter of Uniform Spanning Trees on Random Weighted Graphs}

\author[L. Makowiec]{Luca Makowiec}
\address{Department of Mathematics\\
	National University of Singapore\\
	10 Lower Kent Ridge Road, 119076 Singapore
}
\email{makowiec@u.nus.edu}

\author[M. Salvi]{Michele Salvi}
\address{
	Department of Mathematics\\
	University of Rome Tor Vergata\\
	Via della Ricerca Scientifica 1, 00133 Rome, Italy
}
\email{salvi@mat.uniroma2.it}

\author[R. Sun]{Rongfeng Sun}
\address{Department of Mathematics\\
	National University of Singapore\\
	10 Lower Kent Ridge Road, 119076 Singapore
}
\email{matsr@nus.edu.sg}

\begin{abstract}
For any edge weight distribution, we consider the uniform spanning tree (UST) on finite graphs with i.i.d.\ random edge weights. We show that, for bounded degree expander graphs and finite boxes of $\Z^d$, the diameter of the UST is of order $n^{1/2+o(1)}$ with high probability, where $n$ is the number of vertices.
\end{abstract}

\keywords{Uniform spanning tree}
\subjclass[2020]{Primary: 60K35;  Secondary: 82B41, 82B44, 05C05}


\maketitle

\setcounter{tocdepth}{2}
\makeatletter
\def\l@subsection{\@tocline{2}{0pt}{2.5pc}{5pc}{}}
\makeatother

\section{Introduction}

\subsection{Background and Main Result}

Let $(G, w)$ be a connected weighted finite graph, where $G=(V, E)$ has vertex set $V$ and edge set $E$, and $w:=(w_e)_{e\in E}$ are the weights (or conductances) assigned to the edges, with $w_e>0$ for all $e\in E$. A {\em spanning tree} $T$ of $G$ is a cycle--free connected subgraph of $G$ with the same vertex set $V$. We identify $T$ with its own edge set and write $\T=\T(G)$ for the set of all spanning trees on $G$.
The {\em uniform spanning tree} (UST) on $(G, w)$ is then defined to be the random spanning tree $\cT$ on $G$ with probability distribution
\be\label{eq:UST}
\bP^{w}(\cT = T) = \frac{1}{Z} \prod_{e\in T} w_e \qquad \mbox{with }\ Z=Z(w):= \sum_{T\in \T} \prod_{e\in T} w_e\,.
\ee
The UST is a fundamental object in combinatorics and probability, which has interesting connections to electric networks, loop erased random walks, percolation, dimers and many other topics, see e.g.~\cite{Pem95, Bar16, LP16, Law18} for more background on the UST. We point out that most studies of the UST are on unweighted graphs where $w \equiv 1$.

\smallskip	

One fundamental question concerns the scaling limits of the UST on sequences of large finite graphs. To identify the limit, the first step is to identify the correct order of the diameter $\mathrm{diam}(\cT)$, that is, the maximal graph distance in the UST between any pair of vertices.  For unweighted {\em ``high-dimensional''}   graphs, such as the complete graph, finite tori in dimension $d \geq 5$, expanders and dense graphs, it has been shown in \cite{Sze83, Ald90, CHL12, PR05, MNS21, ANS22} that the diameter of the UST is typically of order $\sqrt{n}$, where $n$ is the number of vertices in the graph. In fact, it is believed that, seen as a random metric space equipped with the graph distance, the UST rescaled by $1/\sqrt{n}$ would converge in distribution to Aldous' {\em continuum random tree (CRT)} \cite{Ald91a, Ald91b, Ald93}. This was verified in the Gromov-Hausdorff topology for the complete graph in \cite{Ald91a, Ald91b, Ald93}, in finite-dimensional distribution for finite tori in dimension $d\geq 5$ and $d=4$ in \cite{PR05} and \cite{Sch09} respectively, and in the stronger Gromov-Hausdorff-Prohorov topology for finite tori in dimension $d\geq 5$ in \cite{ANS21} and for dense graphs in \cite{AS24}.

\smallskip

Our goal in the present work is to initiate the study of the UST on random weighted graphs with i.i.d.\ edge weights, which defines a disordered system similar to {\em random walks in random environment}, a topic that has been studied extensively, see e.g.~\cite{Szi04, Zei04}. The basic question is how the random environment affects the behaviour of the UST. More specifically, we study the diameter of the UST in a typical environment.
We point out that the techniques developed for unweighted graphs, such as in \cite{MNS21}, no longer apply when the edge weights are not uniformly bounded away from $0$ and $\infty$.
We will treat two separate cases for the underlying graph: expander graphs with bounded degrees (see Section \ref{S:expmixing} for the proper definition) and boxes in the $\Z^d$ lattice, with $d\geq 5$. We show that in both cases, regardless of the edge weight distribution, the diameter of the UST in a typical random environment is of order $\sqrt{n}$ modulo a factor of $(\log n)^c$. Our main result is the next theorem.


\begin{theorem} \label{T:main}
	Let $G = (V,E)$ be a graph with $|V|=n$, which is either of the following: 
	\begin{itemize} 
		\item[(i)] a $b$-expander graph for some $b>0$ with maximum degree $\Delta<\infty$; 
		\item[(ii)] the box $[-L, L]^d\cap \Z^d$ of volume $n$, for some $d\geq 5$.
	\end{itemize}
	Let $(w_e)_{e\in E}$ be i.i.d.\ random edge weights with common distribution $\mu$ satisfying $\mu(0, \infty) = 1$. Denote probability and expectation w.r.t.\ $w$ by $\P$ and $\E$. Given $w$, let $\mathcal{T}$ be the UST on the weighted graph $(G, w)$ defined as in \eqref{eq:UST}, with its law denoted by $\bP^w$. Then there exist constants $c_1, c_2 > 0$ (with $c_1 = c_1(\mu, b\, \Delta)$ for case (i), and $c_1 = c_1(\mu, d), c_2 = c_2(d)$ for case (ii)) 
    and $\gamma>0$ such that for all $n \geq 2$ and $\epsilon > n^{-\gamma}$,
	\begin{equation}\label{eq:probbdd}
		\widehat \P \left( \frac{\sqrt{n}}{c_1 (\eps^{-1}\log n)^{c_2}} \leq \mathrm{diam}(\cT) \leq c_1 (\eps^{-1} \log n)^{c_2} \sqrt{n}  \right) \geq 1-\eps\,,
	\end{equation}
	where $\widehat \P(\cdot)$ denotes the averaged law $\E\bP^w(\cdot)$. 
\end{theorem}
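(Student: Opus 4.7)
The natural tool is Wilson's algorithm: conditional on $w$, the UST $\cT$ can be generated by iteratively loop-erasing the weighted random walk with transition kernel $p^w(x,y) = w_{xy}/\pi^w(x)$, where $\pi^w(x) = \sum_{z\sim x} w_{xz}$, and every branch of $\cT$ is distributed as a loop-erased random walk (LERW) between its endpoints. Bounding $\mathrm{diam}(\cT)$ thus reduces to bounding the maximal LERW length in the quenched environment, and the target order $\sqrt{n}$ up to polylog is exactly what is known for the unweighted analogues in \cite{MNS21,ANS22,PR05}. The strategy is therefore to truncate the environment to a bounded range of conductances, transfer the unweighted estimates to the truncated setting, and show that the rare ``bad'' edges do not shift LERW lengths at the polynomial scale.

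First I would fix $\eps>0$ and pick $0<a<A<\infty$ with $\mu([a,A])\geq 1-\delta$ for $\delta=\delta(\eps,\mu)$ sufficiently small. Call $e$ \emph{moderate} when $w_e\in[a,A]$ and \emph{bad} otherwise. A Bernoulli-percolation comparison yields a high-probability event $\mathcal{E}$ on the environment on which (a) bad edges cover at most a $2\delta$-fraction of $E$, and (b) the moderate subgraph $G_{\mathrm{mod}}$ inherits the relevant geometry of $G$: a slightly degraded expansion constant in case (i), and a supercritical-percolation giant component with classical heat-kernel bounds in case (ii). On $\mathcal{E}$ the weighted walk restricted to $G_{\mathrm{mod}}$ is quasi-isometric to the unweighted walk with constants depending only on $a/A$. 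I would then transfer the quantitative random-walk inputs needed for LERW control: for case (i), a weighted Cheeger inequality gives $t_{\mathrm{mix}}^w = O(\log n)$ and commute times of order $n$; for case (ii), the i.i.d. random conductance model restricted to $[a,A]$ admits quenched Gaussian heat-kernel bounds and Green's function $G^w(x,y)\asymp |x-y|^{2-d}$ via Barlow-type arguments. From these a Schramm-style estimate shows that the LERW from any vertex to a target set of macroscopic stationary mass has length $\Theta(\sqrt{n})$ up to polylog factors with polynomially small failure probability. Plugging this into Wilson's algorithm and union-bounding over start vertices gives the upper bound in \eqref{eq:probbdd}. For the matching lower bound, I would select two vertices at maximal graph distance and apply a second-moment argument to the intersection of two independent walks in the moderate environment, producing a tree-branch of length at least $\sqrt{n}/\mathrm{polylog}$ with positive quenched probability.

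The main obstacle is the effect of bad edges in case (ii). A cluster of atypically heavy bad edges could be forced into $\cT$ and create a shortcut of length below $\sqrt{n}$, while a cluster of atypically light edges could inflate effective resistances at the critical scale and yield abnormally long branches. The core technical task is therefore a sandwich/coupling argument showing that on $\mathcal{E}$ the bad edges are too sparse along any relevant LERW path to shift either hitting times or loop-erased lengths by more than the polylog factor $(\eps^{-1}\log n)^{c_2}$ appearing in \eqref{eq:probbdd}. The same argument must be uniform over the $\mathrm{poly}(n)$ LERW targets produced by Wilson's algorithm, which is where a union bound combined with the polynomial failure estimates for the random-walk inputs controls the exponent $c_2$ and the required lower bound $\eps>n^{-\gamma}$.
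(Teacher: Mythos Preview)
Your proposal correctly identifies Wilson's algorithm and the truncation $w_e\in[a,A]$ as natural starting points, but the step you flag as ``the core technical task'' --- the sandwich/coupling for bad edges --- is in fact the whole content of the theorem, and you do not supply a mechanism for it. The walk that generates $\cT$ via Wilson's algorithm lives on the full weighted graph $(G,w)$, not on $G_{\mathrm{mod}}$. A single edge with $w_e=\e^{n}$ (which occurs with positive probability for any unbounded $\mu$) traps the walk at its endpoints: the quenched mixing time of $(G,w)$ can be of arbitrary polynomial order in $n$, the on-diagonal heat kernel blows up locally, and none of the Schramm-type LERW length estimates you invoke survive. Your quasi-isometry and Barlow-type bounds apply only to the walk on $G_{\mathrm{mod}}$, which is not the walk generating $\cT$. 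A comparison of LERW lengths between the true and truncated environments would require some monotonicity of LERW in edge weights, and none is available; this is exactly why the paper states explicitly that the techniques of \cite{MNS21} break down when weights are not uniformly bounded away from $0$ and $\infty$.

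The paper circumvents this entirely via the \emph{spatial Markov property} of the UST (Lemma~\ref{L:SMarkov}). Rather than analyse the walk on $(G,w)$, one \emph{conditions} on the realisation $\cT(K)$ of the UST on the set $K$ of bad edges. Given $\cT(K)$, the conditional law of $\cT$ on the moderate edges is exactly the law of a UST on the graph $G'=(G-(K\setminus\cT(K)))/\cT(K)$, obtained by deleting bad edges not in the tree and contracting those that are. Every edge weight in $G'$ lies in $[a,A]$ by construction, so the walk on $(G',w')$ is uniformly elliptic and a quantitative version of \cite{MNS21} (Theorem~\ref{T:Nach}) applies directly --- once one checks that $G'$ retains good expansion (case~(i)) or a good bottleneck profile (case~(ii)) \emph{uniformly over the realisation of $\cT(K)$}. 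That uniformity is the actual work of the paper (Proposition~\ref{P:Cheeger} and Lemma~\ref{L:PhiLattice}), and it is where the percolation estimates you mention are used: not to control bad edges along a LERW path, but to show that deletion and contraction of closed edges degrade the isoperimetry of $G$ only by polylogarithmic factors. Undoing the contractions at the end costs at most another factor of $\log n$ in the diameter.
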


One may apply Theorem \ref{T:main} as follows.
\begin{remark}\label{R:seq}
	Consider fixed $\mu$ with $\mu(0,\infty)=1$, $b > 0$ and $\Delta < \infty$ and let $(G_n)_{n \in \N}$, with $|V_n| = n$, be a sequence of graphs such that each $G_n$ is a $b$-expander with maximum degree at most $\Delta$ and i.i.d.\ edge weights $w_n$ distributed according to $\mu$. Then, since the constants in Theorem \ref{T:main} are independent of $n$,  the diameter of $\cT_{(G_n, w_n)}$ is of order $\sqrt{n}$ up to polylogarithmic factors with high probability as $n\to\infty$. 
\end{remark}
\begin{remark} \label{R:not_bounded}
	The conclusion in Remark \ref{R:seq} cannot hold in general if we drop the condition that graphs in the sequence $(G_n)_{n\in\N}$ have uniformly bounded maximal degrees, because the upper and lower bounds on $\mathrm{diam}(\cT)$ in \eqref{eq:probbdd} depend on the maximal degree of the graph. For example, in Section \ref{S:counterex} we show that the diameter of the UST on the complete graph is typically of order $n^{1/3+o(1)}$ if the law of the edge weights is very heavy-tailed. 
\end{remark}

For further studies, it will be interesting to investigate whether the factors of $(\log n)^c$ can be removed from the bounds on the diameter for any choice of $\mu$. If the answer is positive, a natural question is whether one can show convergence along a sequence $G_n=(V_n, E_n)$ (with $|V_n|=n$) to Aldous' continuum random tree under either the averaged law $\widehat\P$, or the quenched law $\bP^w$ for typical realisations of the random edge weights $w$.

\subsection{Proof Strategy} \label{S:outline}

For unweighted graphs $G=(V, E)$, general conditions have been formulated in \cite{MNS21} that imply the diameter of the UST on $G$ to be
typically of order $\sqrt{|V|}$. However, these conditions can not be applied in our setting with i.i.d.\ random edge weights when the support
of the edge weight distribution $\mu$ is not bounded away from $0$ and $\infty$.

Our novel idea is to first single out edges whose weights
(or conductances) fall inside an interval $[1/A, A]$ for some $A>0$, which form a bond percolation process on $G$ with parameter $p\in (0,1)$ that
can be chosen arbitrarily close to $1$ by choosing $A$ large. We then condition on the edge configuration of the UST $\cT$ on the remaining closed edges, i.e., the edges with weight outside of $[1/A, A]$. This conditioning essentially allows us to consider the UST on a modified graph with edge weights uniformly bounded away from 0 and $\infty$. More precisely, by the spatial Markov property (see Lemma \ref{L:SMarkov} below), the conditional law of $\cT$ on the (open) edges with weights inside $[1/A, A]$ is the same as the law of a UST $\cT'$ on a new graph $G'$ (possibly with  multiple edges) obtained from $G$, where each closed edge is contracted if it lies in $\cT$ and deleted if it does not lie in $\cT$. We will then show that for typical realisations of the random edge weights and uniformly w.r.t.\ the realisation of $\cT$ on the closed edges, the graph $G'$ satisfies the conditions of Theorem \ref{T:Nach} below, which is a strengthened version of \cite[Theorem 1.1]{MNS21} and implies bounds on the diameter of $\cT'$. Undoing the contractions will then give us the desired bounds on the diameter of $\cT$.

To verify the conditions in Theorem \ref{T:Nach} for $G'$ in the case of expander graphs, we will analyse the \textit{bottleneck ratio} (see \eqref{eq:PhiG}) of $G'$ and show that it matches the bottleneck ratio of the unweighted version of $G$ up to polylogarithmic factors. This in turn will give us strong enough bounds on the mixing time of the lazy random walk on $G'$ needed to apply Theorem \ref{T:Nach}. For finite boxes in $\Z^d$, we will need to go a step further and analyse the whole \textit{bottleneck profile} (see \eqref{eq:BottleProfile}) of $G'$ and again show that this is up to polylogarithmic factors the same as that of $G$. Analysing the bottleneck profile instead of the bottleneck ratio allows us to obtain sharper bounds on the random walk transition kernel, which are needed for the case of finite boxes in $\Z^d$.

There is hope that this approach can be generalised to arbitrary bounded degree graphs with good enough expansion properties. We refer to \eqref{eq:ConjIso} in Section \ref{S:GeneralCounter} for the technical condition that is required to extend this result to other graphs.

\subsection{Outline}
The rest of the paper is organised as follows. In Section \ref{S:prelim}, we first recall some background material on the UST and expander graphs. We then formulate Theorem \ref{T:Nach}, which gives a variant of the conditions in \cite{MNS21} to bound the diameter of a UST on weighted graphs. In Section \ref{S:expbound} we show that the graph $G'$ discussed above has good expansion properties, and then in Section \ref{S:pf} we 
verify the conditions of Theorem \ref{T:Nach} for $G'$ and deduce Theorem \ref{T:main}. Section \ref{S:Zd} treats the case of finite boxes in $\Z^d$ with $d\geq 5$. In Section \ref{S:GeneralCounter}, we give a
counter-example to Remark \ref{R:seq} where the graph degrees are not uniformly bounded, and we discuss possible extensions to other graphs.  Lastly, we sketch in Appendix \ref{S:app} the proof of Theorem \ref{T:Nach}.






\section{Preliminaries} \label{S:prelim}

We recall the spatial Markov property of the UST on a weighted graph $(G, w)$ in Section \ref{S:SMarkov} and the definition of edge expansion for $(G, w)$ and its connection
to the mixing time of a lazy random walk on $(G, w)$ in Section \ref{S:expmixing}. Finally, we give conditions that ensure that the diameter of the UST on $(G, w)$ is of order $|V|^{\frac{1}{2}+o(1)}$ with high probability in Section \ref{S:diamcond}.

\subsection{Spatial Markov property of UST}\label{S:SMarkov}

Given a graph $G = (V,E)$, the contraction of an edge $e\in E$ is the graph $G / e$ obtained by removing $e$ and identifying the endpoints of $e$ as a single vertex. The deletion of $e$ is the graph, denoted by $G-\{e\}$, with vertex set $V$ and edge set $E\backslash\{e\}$.
For $A \subseteq E$, the graph $G / A$, resp.\ $G - A$, is defined as the repeated contraction, resp.\ deletion, of all edges in $A$, which can be shown to be independent of the order of contraction, resp.\ deletion.

Given a finite connected weighted graph $(G, w)$, we will let $\cT_{(G,w)}$ denote the UST
on $(G, w)$. The UST is known to satisfy the following spatial Markov property, see e.g.\ \cite[Sec.~2.2.1]{HN19}.

\begin{lemma}\label{L:SMarkov}
	Let $(G, w)$ be a finite connected weighted graph. Let $A, B\subset E$ be two disjoint sets of edges such that
	$\bP(A\subset \cT_{(G,w)}, B\cap \cT_{(G,w)}=\emptyset)>0$. Then for any set of edges $F\subset E$,
	$$
	\bP(\cT_{(G, w)} = F | A\subset \cT_{(G, w)}, B\cap \cT_{(G, w)}=\emptyset) = \bP(\cT_{((G-B)/A, w)} \cup A = F).
	$$
\end{lemma}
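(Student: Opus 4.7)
The plan is to prove this by direct manipulation of the UST formula \eqref{eq:UST}, reducing the conditional probability to an unconditional UST probability on the smaller graph $(G-B)/A$. The key ingredient is a weight-preserving bijection between spanning trees of $G$ containing $A$ and avoiding $B$, on the one hand, and spanning trees of $(G-B)/A$, on the other. Given such a bijection, the conditioning on $\{A\subset \cT,\, B\cap \cT=\emptyset\}$ introduces only the multiplicative factor $\prod_{e\in A}w_e$ into every surviving term, which cancels between numerator and denominator.

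First I would note that by the positivity assumption some spanning tree of $G$ contains $A$ and avoids $B$; in particular $A$ is acyclic, so contracting $A$ in $G-B$ yields a well-defined connected multigraph $(G-B)/A$. Next I would establish the bijection: for any $T\subset E$ with $A\subset T$ and $T\cap B=\emptyset$, the set $T$ is a spanning tree of $G$ if and only if $T\setminus A$ is a spanning tree of $(G-B)/A$. This follows from the standard matroid fact that contracting an acyclic edge set preserves both the cycle structure (outside $A$) and connectivity, so the spanning-tree condition transfers exactly.

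Given this, for any $F\subset E$ that is a spanning tree of $G$ with $A\subset F$ and $F\cap B=\emptyset$, I would write
\begin{equation*}
\bP^w\!\left(\cT_{(G,w)} = F \mid A\subset \cT,\, B\cap \cT=\emptyset\right)
= \frac{\prod_{e\in F} w_e}{\sum_{T} \prod_{e\in T} w_e},
\end{equation*}
where $T$ ranges over spanning trees of $G$ containing $A$ and disjoint from $B$. Pulling the common factor $\prod_{e\in A}w_e$ out of numerator and denominator and reindexing via the bijection gives
\begin{equation*}
\frac{\prod_{e\in F\setminus A} w_e}{Z((G-B)/A, w)}
= \bP^w\!\left(\cT_{((G-B)/A,w)} = F\setminus A\right)
= \bP^w\!\left(\cT_{((G-B)/A,w)} \cup A = F\right),
\end{equation*}
which is the desired identity. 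For $F$ outside the admissible class (not a spanning tree of $G$, not containing $A$, or intersecting $B$) both sides are zero.

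There is no serious obstacle here; the only care required is in handling the multigraph structure of $(G-B)/A$ correctly, since contraction may produce parallel edges and self-loops. Self-loops lie in no spanning tree, so they do not affect the partition function or the bijection, while parallel edges are treated as distinct edges carrying their individual weights, which is consistent with the definition of $Z$ in \eqref{eq:UST}.
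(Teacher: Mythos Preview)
Your proof is correct and is the standard direct argument for the spatial Markov property via the weight-preserving bijection between spanning trees of $G$ containing $A$ and avoiding $B$ and spanning trees of $(G-B)/A$. The paper does not actually prove this lemma; it simply states the result and cites \cite[Sec.~2.2.1]{HN19}, so your write-up supplies strictly more detail than the paper itself.
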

\noindent
Namely, conditioned on $\cT_{(G, w)}$ containing all edges in $A$ and none of the edges in $B$, the law of $\cT_{(G, w)}$ restricted to $E\backslash (A\cup B)$ is the same as that of $\cT_{((G-B)/A, w)}$, the UST on the weighted graph where we have deleted all edges in $B$ and then contracted all edges in $A$.

\subsection{Edge expansion and mixing time}\label{S:expmixing}

Let $G=(V, E)$ be a finite graph. The edge expansion of a set of vertices $S\subset V$ is defined as
\begin{equation*}
	h_G(S) := \frac{|E(S, S^c)|}{|S|},
\end{equation*}
where $E(S, S^c)$ denotes the edges between $S$ and $S^c:=V\backslash S$. The {\em isoperimetric constant} or the {\em Cheeger constant} of $G$ (see e.g.\ \cite{Moh89}) is then  defined by
\begin{equation}\label{eq:hG}
	h_G := \min\limits_{1 \leq |S| \leq \frac{|V|}{2}} h_G(S).
\end{equation}
Given $b>0$, $G$ is called a {\em $b$--expander} if $h_G \geq b$, which is equivalent to
\be \label{eq:hG2}
|E(S, S^c)| \geq b \min \{ |S|, |S^c|\} \qquad \mbox{for all } S\subset V.
\ee

Consider now weights $(w_e)_{e \in E}$ on the edges of $G$. To avoid periodicity issues, one
typically considers the discrete-time lazy random walk $X$ with one-step transition probability
\begin{equation}\label{eq:lazy}
	q(x,y) = \left\{
	\begin{aligned}
		\frac{1}{2}\qquad  & \qquad \mbox{ if } x=y, \\
		\frac{1}{2}\frac{w_{\{x, y\}}}{\sum_z w_{\{x, z\}}} & \qquad \mbox{ if } x\neq y,
	\end{aligned}
	\right.
\end{equation}
and $t$-steps transition probabilities $q_t(x,y)$. The stationary distribution $\pi$ of $X$ satisfies
\begin{equation*}
\pi(x) = \frac{\sum_{v\in V} w_{\{x, v\}}}{\sum_{u, v\in V} w_{\{u, v\}}}.
\end{equation*}

The notion of edge expansion and isoperimetric constant can be extended to the weighted graph $(G, w)$ as follows. For $S\subset V$, the {\em bottleneck ratio} (also called {\em conductance}) of $S$ is defined as
\begin{equation}\label{eq:PhiGS}
	\Phi_{(G, w)}(S) := \frac{\sum_{e \in E(S, S^c)} w_e}{2 \sum_{x \in S, y\in V} w_{\{x, y\}}},
\end{equation}
where $w_{\{x,y\}}=0$ if $\{x, y\}\notin E$. The following quantity, which we will call the {\em bottleneck ratio} of $(G, w)$, defines an analogue of the isoperimetric constant for weighted graphs:
\begin{equation}\label{eq:PhiG}
\Phi_{(G, w)} := \min\limits_{0 < \pi(S) \leq 1/2} \Phi_{(G, w)}(S), \quad \mbox{where} \ \ \pi(S) = \sum_{x \in S} \pi(x) = \frac{\sum_{x \in S, y\in V} w_{\{x, y\}}}{\sum_{x, y\in V} w_{\{x, y\}}}.
\end{equation}
We remark that when $G$ is $d$-regular with constant weights, then the definitions in \eqref{eq:hG} and \eqref{eq:PhiG} differ up to a multiplicative constant in $[d, 2d]$. Furthermore, we note that given $b>0$, $\Phi_{(G, w)} \geq b$ is equivalent to
\begin{equation}\label{eq:PhiG2}
\sum_{e \in E(S, S^c)} w_e \geq 2 b \min\Big\{ \sum_{x \in S, y\in V} w_{\{x, y\}}, \sum_{x \in S^c, y\in V} w_{\{x, y\}} \Big\} \quad \mbox{for all } S\subset V,
\end{equation}
and we may rewrite the bottleneck ratio as 
\begin{equation}\label{eq:PhiGS2}
	\Phi_{(G, w)}(S)= \frac{\sum_{x\in S, y\in S^c} \pi(x) q(x,y)}{ \pi(S)},
\end{equation}
where we require the factor of $2$ in \eqref{eq:PhiGS} as the random walk is lazy. 

The (uniform) mixing time of the lazy random walk $X$ on $(G, w)$ is defined as
\begin{equation}\label{eq:tmix}
	\tmix(G, w) := \min \left\{ t \geq 0 : \max\limits_{u,v \in V} \left| \frac{q_t(u,v)}{\pi(v)} - 1\right| \leq \frac{1}{2}\right\}.
\end{equation}
We have the following relations between $\tmix(G, w)$ and $\Phi_{(G, w)}$.
\begin{theorem}[Cheeger Bound] \label{T:Cheeger}
	The mixing time $\tmix(G, w)$ of the lazy walk on $(G, w)$ and the bottleneck ratio $\Phi_{(G, w)}$ satisfy
	\begin{equation*}
		\frac{1}{4 \Phi_{(G, w)}} \leq \tmix(G, w) \leq  \frac{2 \log(2/\pi_{\min})}{\Phi_{(G, w)}^2},
	\end{equation*}
	where $\pi_{\min} = \min_{v \in V} \pi(v)$.
\end{theorem}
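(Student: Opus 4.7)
Since Theorem~\ref{T:Cheeger} is the classical discrete Cheeger bound adapted to the weighted lazy setting, I would follow the standard two-step approach: prove the lower bound by a direct bottleneck/union-bound argument on ergodic flows, and prove the upper bound by combining the discrete Cheeger inequality for the spectral gap of reversible lazy chains with a spectral decomposition of $q_t$.

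For the lower bound $\tmix(G,w)\geq 1/(4\Phi_{(G,w)})$, I would pick a set $S\subset V$ with $\pi(S)\leq 1/2$ attaining the minimum in \eqref{eq:PhiG} and consider the lazy walk started from its stationary distribution $\pi$. By reversibility together with a union bound over the time of the first crossing from $S$ to $S^c$ in $t$ steps, the flow across the cut satisfies
\begin{equation*}
\sum_{u\in S}\pi(u)\,q_t(u,S^c)\,\leq\, t \sum_{x\in S,\,y\in S^c}\pi(x)q(x,y)\,=\, t\,\pi(S)\,\Phi_{(G,w)},
\end{equation*}
where the last equality uses \eqref{eq:PhiGS2}. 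On the other hand, if $t\geq \tmix(G,w)$ then \eqref{eq:tmix} gives $q_t(u,v)\geq \pi(v)/2$ for all $u,v\in V$, hence $q_t(u,S^c)\geq \pi(S^c)/2$ uniformly in $u$; averaging over $u\in S$ against $\pi(u)/\pi(S)$ and combining with the previous display yields $\tmix(G,w)\,\Phi_{(G,w)}\geq \pi(S^c)/2\geq 1/4$.

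For the upper bound, I would exploit that the transition matrix $Q=(q(x,y))_{x,y\in V}$ is self-adjoint on $\ell^2(\pi)$ and, thanks to the lazy step, satisfies $Q\succeq \tfrac12 I$, so all its eigenvalues lie in $[0,1]$. Let $\lambda_2$ denote the second largest eigenvalue and $\gamma_\star:=1-\lambda_2$. The classical discrete Cheeger inequality for reversible lazy chains then gives $\gamma_\star\geq \Phi_{(G,w)}^2/2$, and the standard spectral decomposition of $Q^t$ in an orthonormal $\ell^2(\pi)$-eigenbasis yields the pointwise bound
\begin{equation*}
\Bigl|\frac{q_t(u,v)}{\pi(v)}-1\Bigr|\,\leq\,\frac{\lambda_2^t}{\sqrt{\pi(u)\pi(v)}}\,\leq\,\frac{\e^{-t\gamma_\star}}{\pi_{\min}}.
\end{equation*}
Requiring the right-hand side to be at most $1/2$ and plugging in $\gamma_\star\geq \Phi_{(G,w)}^2/2$ yields exactly $\tmix(G,w)\leq 2\log(2/\pi_{\min})/\Phi_{(G,w)}^2$.

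There is no substantive obstacle here beyond careful bookkeeping of constants: one has to check that the factor $\tfrac12$ in the Cheeger inequality, the factor $2$ in the denominator of \eqref{eq:PhiGS} (which is precisely what offsets the bias introduced by laziness), and the $\ell^\infty$ prefactor $1/\pi_{\min}$ all line up to produce the stated constants. As the whole argument is completely classical, I would either refer to a standard textbook treatment or relegate a short self-contained proof to an appendix.
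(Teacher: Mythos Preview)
Your proof sketch is correct and follows the standard textbook route (bottleneck flow for the lower bound, Cheeger's inequality plus spectral decomposition for the upper bound). The paper does not actually prove Theorem~\ref{T:Cheeger} at all: it simply cites \cite{SJ89} and Chapters~7, 12, 13 of \cite{LP17}, which contain precisely the argument you outline. So your final remark --- that one would refer to a standard textbook treatment --- is exactly what the authors do.
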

This result goes back to \cite{SJ89}, we also refer to Chapters 7, 12, and 13 of \cite{LP17}.

\subsection{Diameter of the UST}\label{S:diamcond}

In \cite{MNS21}, the authors considered finite unweighted graphs $G=(V, E)$ with $|V|=n$. Under three conditions on $G$, they showed that the UST on $G$ has diameter of order $\sqrt{n}$ with high probability. We state here the analogue of their conditions for a weighted graph $(G, w)$ and remark that the main difference is in \eqref{eq:balanced}, which coincides with their original condition when $w \equiv 1$, in which case \eqref{eq:balanced} says that the ratio of maximum to minimum degree is bounded. We say that $(G,w)$ is balanced, mixing and escaping with parameters respectively $D, \alpha, \theta > 0$ if the following are satisfied:
\begin{enumerate}
	\item $(G, w)$ is balanced if
	\begin{equation} \label{eq:balanced}
		\frac{\max_{u\in V} \pi(u)}{\min_{u\in V} \pi(u)}= \frac{\max_{u \in V} \sum_{v} w_{\{u,v\}}}{\min_{u \in V} \sum_{v} w_{\{u,v\}}} \leq D;
	\end{equation}
	\item $(G, w)$ is mixing if
	\begin{equation} \label{eq:mixing}
		\tmix(G, w) \leq n^{\frac{1}{2} - \alpha}\,;
	\end{equation}
	\item $(G, w)$ is escaping if
	\begin{equation} \label{eq:escaping}
		\sum_{t=0}^{\tmix} (t+1) \sup_{v \in V}q_t(v,v) \leq \theta\,.
	\end{equation}
\end{enumerate}
In \cite{MNS21}, the bound on the diameter of the UST on an unweighted graph $G$ was formulated in terms of the constants $D, \alpha, \theta$ which do not depend on $n$. We formulate here an extension that includes weighted graphs and also allows $D$ and $\theta$ to depend on $n$.

\begin{theorem}[Extension of Theorem 1.1 in \cite{MNS21}] \label{T:Nach}
	For any $\alpha>0$, there exist $C, k, \gamma > 0$ such that if $(G,w)$ satisfies condition \eqref{eq:mixing} with $\alpha$ and conditions \eqref{eq:balanced} and \eqref{eq:escaping} for some $D=D(n)$ and $\theta=\theta(n)$ with $D,\theta \leq n^{\gamma}$, then for any $\epsilon > n^{-\gamma}$,
	\begin{equation}\label{eq:Nach1}
	\bP^{w}\big( (CD \theta \epsilon^{-1})^{-k} \sqrt{n} \leq \mathrm{diam}(\cT_{(G, w)}) \leq (CD \theta \epsilon^{-1})^{k} \sqrt{n}\big) \geq 1 - \epsilon.
	\end{equation}
\end{theorem}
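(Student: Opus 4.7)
The plan is to follow the proof strategy of \cite{MNS21}, with two essential extensions: replacing the simple random walk on an unweighted graph by the lazy $q$-walk of \eqref{eq:lazy}, and tracking the polynomial-in-$n$ dependence on $D=D(n)$ and $\theta=\theta(n)$ throughout. The central tool is Wilson's algorithm, which samples $\cT_{(G,w)}$ by successively appending loop-erased $q$-walks to a growing tree rooted at an arbitrary vertex $v_0$; writing $T_i$ for the partial tree after processing $i$ vertices, the branch appended next is a LERW of the $q$-walk run until it hits $T_i$.

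For the upper bound, the scheme is to show that once $\pi(T_i) \geq 1/(2D)$, every subsequent LERW from a vertex $u$ to $T_i$ has length at most $(CD\theta\eps^{-1})^k\sqrt n$ with probability $1-\eps/n^2$, so that a union bound over the $n-1$ branches controls $\mathrm{diam}(\cT_{(G,w)})$. The expected LERW length is expressed via Wilson's formula as $\sum_x \bP[x\in \textup{LERW}]$, which reduces to a Green function computation. The escaping condition \eqref{eq:escaping} controls these Green function sums, while \eqref{eq:balanced} and \eqref{eq:mixing} yield the hitting-time bound $\bE_u[\tau_{T_i}]\lesssim D\,\tmix/\pi(T_i)$ through a standard spectral argument applied to the $\pi$-symmetric walk $q$. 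Higher-moment control, needed to upgrade the first-moment bound to the required tail estimate, is obtained by iterating \eqref{eq:escaping} as in \cite{MNS21}.

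The lower bound relies on a second-moment argument applied to the LERW length between two vertices $u,v$ sampled independently from $\pi$. The first moment is again computed from the Green function, and the second moment is controlled via a pairwise variant of \eqref{eq:escaping}. Concentration produces two vertices whose LERW distance, and hence tree distance in $\cT_{(G,w)}$, is at least $c\sqrt n/(CD\theta\eps^{-1})^k$ with probability $1-\eps$.

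The main obstacle is propagating the polynomial factors $D$ and $\theta$ through the chain of estimates from \cite{MNS21} without incurring super-polynomial blowup. In the unweighted case, $\pi$ is uniform and most Green function computations are clean; here the $\pi$-ratio may be as large as $D(n)=n^\gamma$, so every hitting probability, return probability, and spectral bound must be re-examined to verify that the extra factors of $D$ are absorbed into the polynomial constants $k$ and $\gamma$. Equally delicate is that \eqref{eq:escaping} is stated with $\sup_v q_t(v,v)$, which forces pointwise rather than averaged control of return probabilities in the LERW tail estimate, and requires a careful rescaling of the concentration argument in \cite{MNS21} so that $\theta(n)$ enters only polynomially in the final bound \eqref{eq:Nach1}.
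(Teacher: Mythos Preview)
Your proposal correctly identifies Wilson's algorithm and loop-erased random walk as the engine, and the goal of tracking polynomial dependence on $D$ and $\theta$. But the route you sketch for the upper bound diverges from the one in \cite{MNS21} (which the paper follows verbatim, only quantifying the constants), and as written it has a gap.

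The paper's argument hinges on the \emph{capacity} ${\rm Cap}_r(\varphi)$ of the single LERW path $\varphi$ between two $\pi$-random vertices. One first proves (Claims 2.2--2.9 in \cite{MNS21}, made quantitative) that with probability $\geq 1-\eps$ both $|\varphi|\leq A\sqrt n$ and ${\rm Cap}_r(\varphi)\geq \chi q$ hold, where $A,\chi$ are explicit in $D,\theta,\eps$. The capacity lower bound immediately gives the diameter lower bound. For the upper bound one runs Wilson's algorithm rooted at $W=\varphi$ and applies Theorem 3.1 of \cite{MNS21}, whose input is precisely the pair $(|W|,{\rm Cap}_r(W))$ and whose proof goes through the bubble-sum quantity $\mathcal{B}_W(G)$ controlled by $\theta$ and $\chi$. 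Capacity, not $\pi$-mass, is what makes subsequent branches hit $W$ in $O(\sqrt n)$ LERW steps with the exponentially small failure probability needed for a union bound over $n$ branches.

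Your threshold criterion $\pi(T_i)\geq 1/(2D)$ is too coarse to replace this. First, it says nothing about the branches added \emph{before} the threshold is reached, yet those branches contribute to the diameter; you would need a separate argument that the partial tree reaches $\pi$-mass $1/(2D)$ after only $O(\sqrt n)$-length branches. Second, even after the threshold, the hitting-time bound $\bE_u[\tau_{T_i}]\lesssim D\,\tmix/\pi(T_i)$ controls the raw walk length, and upgrading this to a tail bound of order $1-\eps/n^2$ on the \emph{LERW} length requires more than iterating moments of \eqref{eq:escaping}: in \cite{MNS21} the sharp tail comes from the capacity-based effective-conductance estimate (their Section 3.1), which is why ${\rm Cap}_r(\varphi)$ is established first. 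Without the capacity input the argument as you describe it would yield a bound on LERW length that grows like $n^2\cdot D^2 \tmix$ rather than $\sqrt n$ after the union bound. Your lower-bound sketch via a direct second moment on $|\varphi|$ is closer in spirit to what is done, but in the paper the first moment lower bound is again read off from the capacity estimate rather than computed independently.
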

We will apply Theorem \ref{T:Nach} to $(G, w)$ with $D, \theta \leq (\log n)^c$ for some $c>0$. The proof of Theorem \ref{T:Nach} will be sketched
in Appendix \ref{S:app}.

\section{Edge Expansion Bounds} \label{S:expbound}

For a weighted graph $(G, w)$ with arbitrary edge weights $w$, the constants $D$ and $\theta$ in \eqref{eq:balanced} and \eqref{eq:escaping} could be so large that the lower and upper bounds on the diameter in Theorem \ref{T:Nach} become too far apart to be meaningful. This happens in particular when the edge weights are i.i.d.\ random variables with a very heavy-tailed
distribution. In this case, there could be an edge whose weight dominates that of all other adjacent edges and the associated random walk would get stuck on that edge for a long time. As outlined in Section \ref{S:outline}, we circumvent this problem by conditioning on the UST $\cT$ restricted to edges whose weights lie outside the interval $[1/A, A]$, which are the closed edges in a percolation process. The conditional distribution of $\cT$ on the open edges is then a UST $\cT'$ on a new graph $(G',w')$ (with possibly multiple edges) where closed edges that lie in $\cT$ have been contracted while closed edges not in $\cT$ have been deleted.
The goal of this section is to give a lower bound on the bottleneck ratio for $(G', w')$ that is uniform over the realisation of $\cT$ on the closed edges
(see Prop.~\ref{P:Cheeger}) and uniform over typical realizations of the edge weights $w$ (that is, $w$ that satisfy the conditions in \eqref{eq:B1234}). Thanks to the relation between the bottleneck ratio and the mixing time in Theorem \ref{T:Cheeger}, this will guarantee that the conditions of Theorem \ref{T:Nach} for $(G',w')$ are fulfilled.

\smallskip

We notice that $(G',w')$ consists only of edges with weights in $[1/A, A]$, so that controlling the isoperimetric constant and the maximum degree in $G'$ is sufficient to give good lower bounds for the bottleneck ratio of $(G',w')$.
We accomplish this by comparing $G'$ with $\mathcal C_1$, the largest connected component of open edges in $G$ (i.e., edges with weights in $[1/A, A]$). It is known that for $A$ large enough, the isoperimetric constant of $\mathcal C_1$ is at least $1/\log |V|$ (see Lemma \ref{L:C1}). The graph $G'$ is obtained from $\mathcal C_1$ by contracting some closed edges and attaching the vertices outside $\mathcal C_1$. A crucial observation (see Lemma \ref{L:RMC1}) is that $\mathcal C_1$ disconnects the remaining vertices of $G$ into components of size at most $\log |V|$, which implies that when we attach these components to $\mathcal C_1$ to obtain $G'$, the isoperimetric constant only changes by a factor of $(\log |V|)^c$.

\smallskip

In Section \ref{S:clustersize}, we state three elementary bounds on percolation cluster sizes. In Section \ref{S:expcluster}, we recall a bound on the isoperimetric constant of the largest percolation cluster $\mathcal C_1$. In Section \ref{S:conditioning}, we bound the bottleneck ratio for the weighted graph $G'$ described above.

\subsection{Bounds on Percolation Clusters Size}\label{S:clustersize}

Given a finite graph $G=(V, E)$ and a percolation parameter $p\in (0, 1)$, we can perform bond percolation on $G$ by independently keeping each edge with probability $p$ and deleting otherwise. Kept edges are also called open, while deleted ones are called closed. In this way, the graph is broken into multiple connected components (or clusters), which are regarded as subgraphs of $G$. For $\ell\in \N$, let $\mathcal C_\ell = C_{\ell}(p)$ denote the $\ell$-th largest open cluster (ties are broken arbitrarily) and let $|\mathcal C_\ell|$ denote the number of vertices in $\mathcal C_\ell$.

We collect here three bounds on the sizes of percolation clusters. The first bound states that for a $b$--expander graph $G=(V, E)$ (cfr.~Section \ref{S:expmixing}), if $p$ is close enough to $1$, then the size of the largest cluster $\mathcal C_1(p)$ is at least $\zeta n$, where $\zeta\in (0, 1)$ can
be made arbitrarily close to $1$ by choosing $p$ close to $1$. In this case, $\mathcal C_1$ is also called the {\em giant component}.

\begin{lemma} \label{L:Giant}
	Let $b>0$. Then for all $\zeta \in (0,1)$, there exists $p_\zeta \in (0,1)$ depending only on $b$ such that for all $p \in [p_\zeta, 1]$ and for all
	$b$-expander graphs $G = (V, E)$ with $|V|=n$,
	\begin{equation*}
		\P( |\mathcal C_1(p)| \geq \zeta n) \geq 1 - \e^{-n}.
	\end{equation*}
\end{lemma}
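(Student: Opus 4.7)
The plan is to establish this via a classical isoperimetric argument adapted to $b$-expanders: if the largest open cluster has fewer than $\zeta n$ vertices, then the percolation must admit a \emph{balanced disconnection}, i.e.\ a subset $S\subset V$ that is a union of open clusters and satisfies $\min(|S|,|V\setminus S|)\geq c_\zeta n$ for some $c_\zeta>0$ depending only on $\zeta$.

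First I would carry out the combinatorial step. Assume all open clusters have size at most $\zeta n$ and order them $s_1\geq s_2\geq\cdots$. If $s_1>n/2$, then $\mathcal C_1$ is the unique cluster of size $>n/2$, and I take $S=V\setminus\mathcal C_1$, which satisfies $(1-\zeta)n\leq |S|<n/2$. Otherwise all clusters have size at most $n/2$, and I greedily add them in decreasing size order until $|S|$ first exceeds $n/3$; since the last cluster added has size at most $n/2$, the resulting $S$ satisfies $n/3\leq |S|\leq 5n/6$. In either case $S$ is a union of open clusters with $\min(|S|,|V\setminus S|)\geq c_\zeta n$, where $c_\zeta:=\min(1/6,\,1-\zeta)$.

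Next I would apply the $b$-expander property in the form \eqref{eq:hG2}. Since $S$ is a union of open clusters, every edge in $E(S,S^c)$ is closed, and there are at least $b c_\zeta n$ such edges. A union bound over all sets $S\subset V$ with $c_\zeta n\leq |S|\leq n/2$ then gives
\begin{equation*}
\P\bigl(|\mathcal C_1(p)|<\zeta n\bigr) \,\leq\, 2^n (1-p)^{b c_\zeta n}.
\end{equation*}
Choosing $p_\zeta$ close enough to $1$, depending only on $b$ and $\zeta$, so that $2(1-p_\zeta)^{b c_\zeta}\leq e^{-2}$ makes the right-hand side at most $e^{-n}$ for every $p\in[p_\zeta,1]$, as required.

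The only delicate point is the first step: a naive union bound that ranges over all subsets of a fixed size $k$ blows up for small $k$, since $\binom{n}{k}$ is not controlled by $(1-p)^{bk}$ when $k=o(n)$. Forcing $S$ to be a balanced union of clusters, with $|S|$ linear in $n$, is precisely what makes $\binom{n}{|S|}\leq 2^n$ manageable against the exponential decay in $|E(S,S^c)|$; the greedy construction, and in particular the swap to the complement when a single cluster is already larger than $n/2$, is what guarantees such an $S$ always exists on the event $\{|\mathcal C_1(p)|<\zeta n\}$.
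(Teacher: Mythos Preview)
Your proof is correct and follows essentially the same strategy as the paper: show that on the event $\{|\mathcal C_1(p)|<\zeta n\}$ one can assemble a balanced union $S$ of open clusters (so that all edges in $E(S,S^c)$ are closed), invoke the $b$-expander bound $|E(S,S^c)|\geq b\min(|S|,|S^c|)$, and then take a crude $2^n$ union bound over subsets. The only difference is in the combinatorial construction of $S$: the paper observes directly that, since every cluster has size $<\zeta n$, some initial partial union $\bigcup_{\ell=1}^k \mathcal C_\ell$ lands in $[\zeta n/2,\zeta n]$, yielding $\min(|S|,|S^c|)\geq \tfrac{1}{2}\min(\zeta,1-\zeta)\,n$, whereas you split into the cases $s_1>n/2$ and $s_1\leq n/2$ and arrive at $c_\zeta=\min(1/6,1-\zeta)$; both lead to the same endgame.
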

\begin{proof}
	Given the percolation configuration on $G$ with parameter $p$, and for $A\subset V$, let $E_p(A, A^c)$ denote the set of open edges connecting $A$ and $A^c$. For $\zeta \in (0,1)$, let us consider the event
	\begin{equation*}
		\cH(\zeta)
		:= \{ \exists \, A \subset V \text{ s.t.\ } |A|\geq \frac{\zeta n} {2}, \,\, |A^c| \geq \frac{(1-\zeta)n}{2}  \,\text{ and }\, E_p(A, A^c) = \emptyset\}\,.
	\end{equation*}
	Note that if $|\mathcal C_1(p)| < \zeta n$, then the event $\cH(\zeta)$ must hold. Indeed, since the clusters $(\mathcal C_\ell(p))_{\ell\geq 1}$ are decreasing in size,  there is some $k \in \N$ such that $|\bigcup_{\ell = 1}^k \mathcal C_\ell(p)| \in [\zeta n/2, \zeta n]$. Choosing $A$ to be the vertex set of
	$\bigcup_{\ell = 1}^k \mathcal C_\ell(p)$ then establishes the event $\cH(\zeta)$.
	
	Since $G$ is a $b$-expander, when $\cH(\zeta)$ holds, there must exist $A \subset V$ with 
	\begin{equation*}
		|E(A, A^c)| \geq b \min \{ \zeta, 1- \zeta\} \frac{n}{2} \quad \text{and} \quad E_p(A, A^c) = \emptyset.
	\end{equation*}
	The probability that all edges between such $A$ and $A^c$ are closed in the percolation configuration is at most $(1-p)^{b \min \{ \zeta, 1- \zeta\} \frac{n}{2}}$. Thus a union bound over all $A\subset V$ gives
	\begin{equation*}
		\P( \mathcal{H}(\zeta))
		\leq 2^n (1-p)^{b \min \{ \zeta, 1- \zeta\} \frac{n}{2}}
		\leq \e^{-n}
	\end{equation*}
	provided $p \geq p_\zeta := 1 - \exp \big( -\frac{2(1+\log 2)}{b\min\{\zeta, 1-\zeta\}} \big)$.
\end{proof}

The second bound of this section states that in a bounded degree graph with $n$ vertices, for a sufficiently small percolation parameter $p'$, the largest open cluster has size at most $\log n$. Equivalently, for $p$ close to $1$, the largest cluster formed by closed edges are of size at most $\log n$.

\begin{lemma} \label{L:scluster}
	For any $\Delta \in \N$ and $\eta>0$, there exist $C>0$ and $p'_\eta \in (0,1)$ such that for any $G = (V, E)$ with $|V| = n$ and maximum degree $\Delta$, and for all $p' \in [0, p'_\eta]$,
	\begin{equation*}
		\P( |\mathcal C_1(p')| \geq \log n) \leq \frac{C}{n^\eta}\,.
	\end{equation*}
\end{lemma}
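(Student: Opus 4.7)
The plan is to use a standard subcritical percolation argument based on a union bound together with a subtree-counting estimate.

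First, I would reduce to a per-vertex estimate. If $|\mathcal C_1(p')| \geq \log n$, then there exists at least one vertex $v \in V$ whose open cluster $\mathcal C(v)$ satisfies $|\mathcal C(v)| \geq \log n$. A union bound gives
\begin{equation*}
	\P\big(|\mathcal C_1(p')| \geq \log n\big) \leq \sum_{v \in V} \P\big(|\mathcal C(v)| \geq \log n\big).
\end{equation*}
So it suffices to show that for $p'$ small enough (depending on $\Delta$ and $\eta$), the single-vertex probability is bounded by a constant times $n^{-(\eta+1)}$.

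Next, I would exploit the fact that if $|\mathcal C(v)| \geq k$, then $\mathcal C(v)$ contains a subtree $T$ of $G$ rooted at $v$ with exactly $k$ vertices and hence $k-1$ edges, all of which must be open. A union bound over the possible trees gives
\begin{equation*}
	\P\big(|\mathcal C(v)| \geq k\big) \leq N_{v,k} (p')^{k-1},
\end{equation*}
where $N_{v,k}$ counts the number of $k$-vertex subtrees of $G$ rooted at $v$. The classical bound for graphs of maximum degree $\Delta$ is $N_{v,k} \leq (e\Delta)^{k-1}$; this can be proved, for instance, by encoding each such tree through its depth-first exploration, where at each of the $2(k-1)$ steps there are at most $\Delta$ choices, followed by a standard Catalan-type refinement. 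Set $\kappa := e\Delta$, so that $\P(|\mathcal C(v)| \geq k) \leq (\kappa p')^{k-1}$.

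Now I would choose the threshold $p'_\eta := \e^{-(\eta+2)}/\kappa$, so that $\kappa p'_\eta \leq \e^{-(\eta+2)}$. For any $p' \leq p'_\eta$ and $k \geq \log n$,
\begin{equation*}
	\P\big(|\mathcal C(v)| \geq \log n\big) \leq (\kappa p')^{\lceil \log n \rceil - 1} \leq \e^{-(\eta+2)(\log n - 1)} = \e^{\eta+2}\, n^{-(\eta+2)}.
\end{equation*}
Substituting into the union bound over $v \in V$ yields
\begin{equation*}
	\P\big(|\mathcal C_1(p')| \geq \log n\big) \leq n \cdot \e^{\eta+2}\, n^{-(\eta+2)} = \e^{\eta+2}\, n^{-(\eta+1)} \leq \frac{C}{n^{\eta}}
\end{equation*}
for $n \geq 1$, with $C := \e^{\eta+2}$, which is the required estimate.

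The only nontrivial ingredient is the tree-counting bound $N_{v,k} \leq (e\Delta)^{k-1}$, but this is a well-known combinatorial fact (see, e.g., Grimmett's monograph on percolation). Everything else reduces to a direct union bound and the choice of $p'_\eta$ small enough to make the exponential decay strong enough to overcome the factor of $n$ from summing over vertices.
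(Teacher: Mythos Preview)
Your proof is correct and follows essentially the same approach as the paper: a union bound combined with the standard count of connected subgraphs (or subtrees) in a bounded-degree graph, namely that at most $\tfrac{n}{r}(e\Delta)^r$ such objects on $r$ vertices exist. The only cosmetic differences are that the paper phrases the union bound over connected subgraphs of each size $r\geq \log n$ and sums the resulting geometric series, whereas you fix $k=\lceil\log n\rceil$, work per vertex with rooted subtrees, and avoid the summation; both arguments are equivalent.
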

\begin{proof}
	For $r\in \N$, let $\mathcal{G}_r$ denote the set of all possible connected subgraphs of $G$ with $r$ vertices, each of which contains at least $r-1$ edges. For a graph with $n$ vertices and maximal degree $\Delta$, it is known that (see e.g.\ \cite[Proof of Lemma 2.2]{ABS04}) $|\mathcal{G}_r| \leq \frac{n}{r}(\Delta \e)^r$. A union bound
	over all connected subgraphs with at least $\log n$ vertices then gives
	\begin{equation*}
		\P(|\mathcal C_1(p)| \geq \log n)
		\leq  \sum_{r = \log n}^{n} \frac{n}{r} (\Delta \e)^r p^{r-1}
		\leq \frac{n}{p \log n}  \cdot
		\frac{ (\Delta \e p)^{ \log n}}{1-\Delta \e p} \leq \frac{C'\Delta}{n^\eta},
	\end{equation*}
	provided $p\leq p_\eta := c/\Delta$ for some $c>0$ sufficiently small.
\end{proof}

The third and last bound controls the components' sizes after removing the giant component $\mathcal C_1$ from $G$. Namely, if the percolation parameter $p$ is close enough to $1$, then after removing all vertices in $\mathcal C_1(p)$ and their incident edges from $G$ (denote the resulting graph by $G\backslash V(\mathcal C_1)$), the connected components of $G\backslash V(\mathcal C_1)$ are typically all of size $\log n$ or less.
\begin{lemma} \label{L:RMC1}
	Let $G = (V, E)$ be a $b$-expander graph with $b>0$, $|V|=n$, and maximum degree $\Delta< \infty$.  For all $\eta >0$, there exist $C>0$ and $p_1 \in (0,1)$ depending only on $\eta, b$ and $\Delta$, such that for any percolation parameter $p \in [p_1, 1]$, the graph $G\backslash V(\mathcal C_1)$ satisfies
	\begin{equation*}
		\P\big( G\backslash V(\mathcal C_1) \text{ contains a connected component } H \text{ with } |H| \geq \log n\big) \leq  \frac{C}{n^{\eta}}.
	\end{equation*}
\end{lemma}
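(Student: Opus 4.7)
The plan is to union bound over all possible vertex sets of large components of $G\setminus V(\mathcal C_1)$, exploiting the $b$-expander property to force many closed edges on the boundary of each such set.

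The key structural observation I would first establish is: if $H$ is a component of $G\setminus V(\mathcal C_1)$, then every $G$-edge from $V(H)$ to $V\setminus V(H)$ must (i) land in $V(\mathcal C_1)$ by maximality of $H$ within $G\setminus V(\mathcal C_1)$, and (ii) be closed --- otherwise its $V(H)$-endpoint would join the open cluster $\mathcal C_1$, contradicting $V(H)\cap V(\mathcal C_1)=\emptyset$. Thus $V(H)$ is a connected vertex set in $G$ whose entire $G$-boundary consists of closed edges.

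Next I would invoke Lemma \ref{L:Giant} with some $\zeta\in(1/2,1)$, forcing $p\geq p_\zeta$, to restrict up to an $\e^{-n}$ cost to the event $\{|V(\mathcal C_1)|\geq n/2\}$. On this event every component $H$ of $G\setminus V(\mathcal C_1)$ satisfies $|V(H)|\leq n/2$, so the $b$-expander property \eqref{eq:hG2} gives $|E(V(H), V\setminus V(H))|\geq b|V(H)|$. Combining this with the connected-subgraph count $|\mathcal G_r|\leq (n/r)(\Delta \e)^r$ recalled in the proof of Lemma \ref{L:scluster}, a union bound yields
\begin{equation*}
\P\big(\exists\, H\text{ with } |V(H)|\geq \log n\big)\ \leq\ \e^{-n}+\sum_{r=\log n}^{n/2}\frac{n}{r}\big(\Delta \e (1-p)^b\big)^r.
\end{equation*}
I would then choose $p_1\in (p_\zeta,1)$, depending on $\eta, b, \Delta$, so that $\Delta \e (1-p)^b\leq \e^{-(1+\eta)}$ for all $p\geq p_1$. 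The sum then becomes geometric with ratio bounded by $\e^{-(1+\eta)}$ and is dominated by $Cn^{-\eta}$, finishing the proof.

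The argument is essentially routine; the only place requiring some care is the structural observation above, which hinges on the correct interpretations of $G\setminus V(\mathcal C_1)$ (the induced subgraph on $V\setminus V(\mathcal C_1)$ retaining \emph{all} $G$-edges between remaining vertices, not just open ones) and of $\mathcal C_1$ as a maximal cluster in the \emph{open} subgraph. Once this is in place, expansion plus a standard cluster-counting union bound does the job, and I do not anticipate any genuine obstacle.
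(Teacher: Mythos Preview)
Your proposal is correct and follows essentially the same route as the paper: the same structural observation that all $G$-boundary edges of a component $H$ of $G\setminus V(\mathcal C_1)$ must land in $V(\mathcal C_1)$ and be closed, the same reduction via Lemma~\ref{L:Giant} to $|V(\mathcal C_1)|\geq n/2$, the expander bound $|E(V(H),V(H)^c)|\geq b|V(H)|$, and the same union bound over connected subgraphs using $|\mathcal G_r|\leq (n/r)(\Delta\e)^r$. The only cosmetic difference is that the paper applies Lemma~\ref{L:Giant} with $\zeta=1/2$ directly rather than some $\zeta\in(1/2,1)$.
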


\begin{proof} First observe that by the definition of $G\backslash V(\mathcal C_1)$, for any connected component $H$ of $G\backslash V(\mathcal C_1)$ the external vertex boundary of $H$ in $G$ must be fully contained in $V(\mathcal C_1)$ and thus $E(V(H), V\backslash V(H)) = E(V(H), V(\mathcal C_1))$.
	By choosing $p$ close enough to $1$, Lemma \ref{L:Giant} ensures that $|\mathcal C_1(p)|\geq n/2$ with high probability. Restricted to this event, if there is a connected component $H$ of $G\backslash V(\mathcal C_1)$ with $|H|\geq \log n$, then we have $|H|\leq n/2$. By the expander property of $G$, the number of edges between $V(H)$, the vertex set of $H$, and its complement satisfies
	\begin{equation*}
		|E(V(H), V\backslash V(H))| = |E(V(H), V(\mathcal C_1))|\geq b |H|\geq b\log n.
	\end{equation*}
	By the definition of $\mathcal C_1(p)$, the edges in $E(V(H), V(\mathcal C_1))$ must all be closed in the percolation configuration. This event has probability at most $(1-p)^{b |H|}$.
	
	Recall the definition of $\mathcal{G}_r$ from the proof of Lemma \ref{L:scluster} and the inequality $|\mathcal{G}_r| \leq \frac{n}{r}(\Delta \e)^r$.
	A union bound
	over all connected subgraphs $H$ with $\log n\leq |H| \leq n/2$ then gives
	\begin{align*}
		&\P(G\backslash V(\mathcal C_1) \text{ contains a connected component } H \text{ with } |H| \geq \log n) \\
		\leq\ & \P\Big(|\mathcal C_1(p)| < \frac{n}{2}\Big)  + \sum_{r = \log n}^{n/2} \frac{n}{r} (\Delta \e)^r (1-p)^{br}
		\leq \e^{-n} +
		\frac{2 n}{\log n} (\Delta \e (1-p)^b)^{ \log n} \leq \frac{C}{n^\eta},
	\end{align*}
	which holds if we choose $p$ close enough to $1$ so that Lemma \ref{L:Giant} holds for $\zeta=1/2$, and $\Delta \e (1-p)^b$ is sufficiently small.
\end{proof}

\subsection{Edge Expansion for Giant Component}\label{S:expcluster}

We recall the following fact from \cite{ABS04}: consider an edge percolation procedure on a $b$--expander graph with bounded degree. If the percolation parameter $p$ is sufficiently close to $1$, then the giant component $\mathcal C_1(p)$ has an isoperimetric constant that is at least $1/\log |V|$  with high probability.

\begin{lemma}[Proposition 5.1 in \cite{ABS04}] \label{L:C1}
	Let $G = (V, E)$ be a $b$-expander graph with $b>0$, $|V|=n$, and maximum degree $\Delta< \infty$. For all $\eta > 0$, there exists $C>0$ such that for any percolation parameter $p \in [p_2, 1]$, where
	\begin{equation*}
		p_2 = 1 - \frac{1}{4} \e^{-\frac{2}{b}(\eta + 2) } \Delta^{-2/b},
	\end{equation*}
	the isoperimetric constant of the giant component $\mathcal C_1(p)$ satisfies
	\begin{equation}\label{eq:hC1tail}
		\P\Big( h_{\mathcal C_1(p)} \leq \frac{1}{\log n}\Big) \leq \frac{C}{n^\eta}.
	\end{equation}
\end{lemma}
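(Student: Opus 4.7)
The plan is a union-bound argument that exploits the expander property of $G$ to convert a small isoperimetric violation in the giant component $\mathcal{C}_1(p)$ into many forbidden closed edges in $G$.

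First, I would invoke Lemma \ref{L:Giant} to fix $p$ close enough to $1$ so that $|\mathcal{C}_1(p)|\geq n/2$ except on an event of probability at most $\e^{-n}$; this event can be absorbed into the final bound. On its complement, suppose $h_{\mathcal{C}_1}\leq 1/\log n$, so some $S\subset V(\mathcal{C}_1)$ with $s:=|S|\leq|\mathcal{C}_1|/2\leq n/2$ satisfies $|E_{\mathcal{C}_1}(S,V(\mathcal{C}_1)\setminus S)|\leq s/\log n$. The connectivity of $\mathcal{C}_1$ via open edges immediately forces $s\geq \log n$, since otherwise $s/\log n<1$ would require the boundary to be empty. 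Moreover, no open edge leaves the connected component $V(\mathcal{C}_1)$ by definition, so the open-edge boundary of $S$ in $G$ coincides with its $\mathcal{C}_1$-boundary. Combined with the $b$-expander hypothesis $|E_G(S,V\setminus S)|\geq bs$, this forces at least $bs-s/\log n\geq bs/2$ of the boundary edges of $S$ in $G$ to be closed in the percolation configuration.

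For a fixed $S$ with $|S|=s$, the binomial tail bound yields
\begin{equation*}
    \P\bigl(\geq bs/2 \text{ closed edges in } E_G(S,V\setminus S)\bigr) \leq \binom{\Delta s}{\lceil bs/2\rceil}(1-p)^{bs/2} \leq \left(\frac{2\e\Delta(1-p)}{b}\right)^{bs/2},
\end{equation*}
which is exponentially small in $s$ once $1-p$ is small. The last step is a union bound over all admissible $S\subset V(\mathcal{C}_1)$ with $s\in[\log n, n/2]$; choosing $1-p_2\leq (1/4)\e^{-2(\eta+2)/b}\Delta^{-2/b}$ is designed precisely to balance the per-$S$ probability against the combinatorial enumeration of bad sets, producing the claimed $C/n^\eta$ decay after summing over $s$.

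The main obstacle is precisely this union bound. The naïve count $\binom{n}{s}\leq (\e n/s)^s$ is too loose at small scales $s\simeq \log n$: for constant $p$, the entropy of $s$-subsets dominates the binomial factor, and one would be forced to let $1-p$ decay polynomially in $n$. To close this gap, the enumeration of admissible $S$ must be refined by exploiting structural constraints — for instance, that the $\geq bs/2$ closed boundary edges specify a small cut in $G$, countable polynomially in $n$ via a Peierls/Karger-type bound, or that $S$ sits inside the random set $V(\mathcal{C}_1)$, whose structure restricts the allowable cuts. This sharper enumeration is the technical core of the argument and is what pins down the explicit threshold for $p_2$ stated in the lemma.
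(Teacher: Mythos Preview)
Your outline matches the argument in \cite{ABS04} that the paper simply cites, and your per-$S$ estimate is correct. The gap is exactly where you place it, but neither of your two suggested fixes is the right one.

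The missing step is an elementary reduction to \emph{connected} $S$. If $S\subset V(\mathcal C_1)$ has $|E_{\mathcal C_1}(S,S^c)|/|S|\le 1/\log n$, decompose $S$ into its $\mathcal C_1$-connected components $S_1,\dots,S_k$; their $\mathcal C_1$-edge boundaries are disjoint and partition $E_{\mathcal C_1}(S,S^c)$, so by averaging some $S_j$ already satisfies the same ratio bound with $|S_j|\le|S|\le|\mathcal C_1|/2$. Since $\mathcal C_1$-connected implies $G$-connected, the union bound need only run over $G$-connected vertex sets of size $s$, of which there are at most $\tfrac{n}{s}(\Delta\e)^s$ --- exactly the enumeration already invoked in the proof of Lemma~\ref{L:scluster}. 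Summing $\tfrac{n}{s}(\Delta\e)^s\cdot\bigl(2\e\Delta(1-p)/b\bigr)^{bs/2}$ over $s\ge\log n$ is then at most $C/n^\eta$ once $1-p$ lies below a threshold of the stated form.

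Your proposed alternatives do not apply here: a Peierls argument would enumerate the cut $E_G(S,V\setminus S)$ as a connected object, but in a general expander the edge boundary of $S$ carries no connectivity structure (contrast the $*$-connectedness available for $\Z^d$ in Lemma~\ref{L:OutsideComponent}); Karger-type bounds control only near-minimum cuts, whereas the cuts arising here have size $\Theta(bs)$ across all scales $\log n\le s\le n/2$, so those bounds are vacuous. The correct structural constraint --- connectedness of $S$ itself --- is a one-line averaging observation, not a cut-counting theorem.
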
\noindent
The proof is the same as in \cite{ABS04}, except that we keep track of the dependency on $\eta$.

\subsection{Conditioning on High and Low Weight Edges} \label{S:conditioning}

We now consider $G=(V, E)$ with $|V|=n$ and i.i.d.\ random edge weights $w=(w_e)_{e\in E}$ with common distribution $\mu$. For some large $A>0$
to be chosen later, we call $e\in E$ open if $w_e\in [1/A, A]$ and call $e$ closed otherwise, which defines a bond percolation process on $G$ with
percolation parameter $p=p(A)=\mu([1/A, A])$. Recall that for
$\ell\in \N$, the set $\mathcal C_\ell = \mathcal C_\ell(p)$ denotes $\ell$-th largest open cluster.  Let $K\subset E$ denote the random set of closed edges. As outlined in Section \ref{S:outline} and at the beginning of Section \ref{S:expbound}, given $w$, we will condition the configuration of the UST $\cT_{(G, w)}$ on $K$,
i.e., condition on $\cT_{(G, w)}(K):= \cT_{(G, w)} \bigcap K$. To simplify notation, we will omit $(G, w)$ from the subscripts and just write $\cT$ and $\cT(K)$.

\begin{figure}[ht]
	\centering
	\includegraphics[width=16cm]{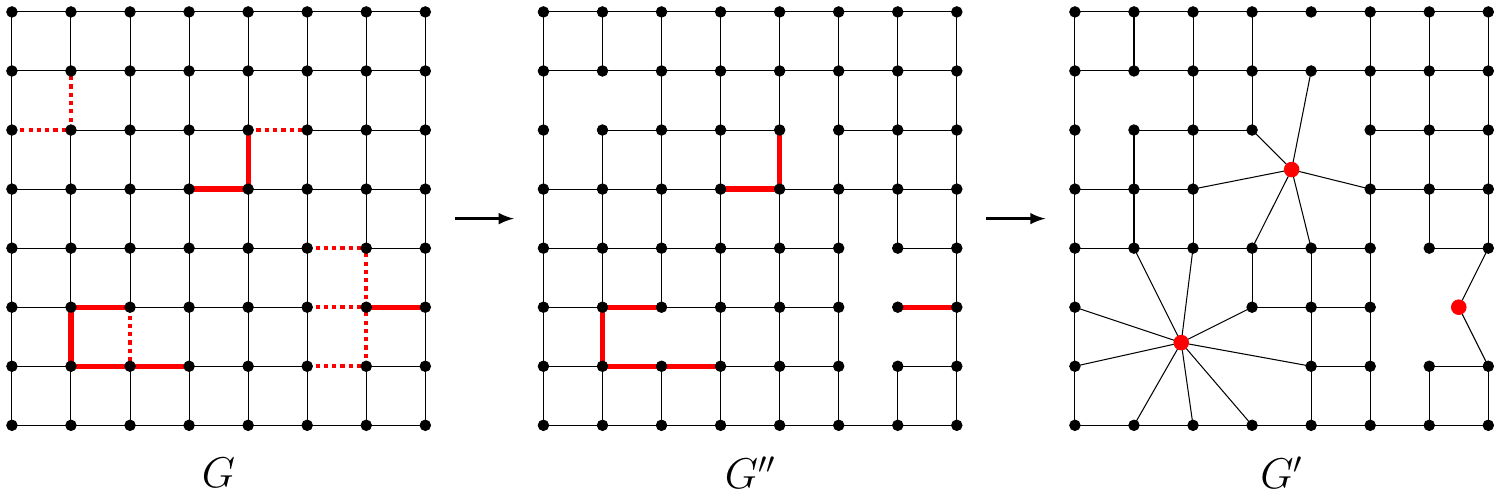}
	\caption{We first perform percolation on $G$ by closing edges with weight outside $[1/A, A]$ (drawn in red). We then condition on the realisation of the uniform spanning tree $\cT$ on the set of closed (red) edges, with edges in $\cT$ drawn in thick red lines, and edges not in $\cT$ drawn in dotted red lines. Deleting the dotted red edges leads to the graph $G''$, and further contracting the solid red edges leads to the graph $G'$.  
	}
\label{fig:ConDel}
\end{figure}

Given $\cT(K)$, we define a new graph (which may have multiple edges)
\be\label{eq:T'}
G' =G'(K, \cT(K)) := \Big(G-\big[ K  \setminus \cT(K)\big] \Big)/\cT(K),
\ee
where edges in $K\backslash \cT(K)$ are deleted and edges in $\cT(K)$ are contracted (cf.~Section \ref{S:SMarkov}), see Figure \ref{fig:ConDel}.  The edge set of $G'$ is exactly
the set of open edges $E\backslash K$, and we assign to each edge the original corresponding weight in $w$ and call the collection of their weights $w'$.
The goal of this section is to give a lower bound on the bottleneck ratio $\Phi_{(G', w')}$ for $(G', w')$ that is uniform both over the configuration of $\cT(K)$ and over all $w$ in the high probability event ${\mathcal B}:=\bigcap_{i=1}^4 B_i$. This is the event given by the intersection of
\be\label{eq:B1234}
\begin{aligned}
B_1 &:= \{|V(\mathcal C_1)| \geq 3n/4\}, \\
B_2 &:= \{ h_{\mathcal C_1} \geq 1/\log n \}, \\
B_3 &:= \{ \text{all connected components of } G \backslash V(\mathcal C_1) \text{ have size at most} \, \log n\}, \\
B_4 & := \{ \text{the closed edges $K$ form clusters of size at most} \, \log n\},
\end{aligned}
\ee
where we recall that $\mathcal C_{1}=\mathcal C_{1}(p)$ denotes the largest open cluster in the percolation process on $G$ with respect to the edge weights $w$.
Lemmas \ref{L:Giant}, \ref{L:scluster}, \ref{L:RMC1} and \ref{L:C1} imply that for any $\eta>0$,
\be\label{eq:Bprob}
\P(B_1 \cap B_2 \cap B_3\cap B_4) \geq 1 - \frac{C}{n^\eta},
\ee
provided that $A$ is chosen large enough so that $p=p(A)=\mu([1/A, A])$ is close enough to $1$.

\begin{proposition} \label{P:Cheeger}
Let $G = (V, E)$ be a $b$-expander with $b>0$, $|V|=n$, and maximum degree $\Delta< \infty$. Let $w=(w_e)_{e\in E}$ be i.i.d.\ random
edge weights with common distribution $\mu$.  Given any $\eta>0$, let $A$ be chosen such that \eqref{eq:Bprob} holds.
Let $G'=G'(K, \cT(K))$ be defined as in \eqref{eq:T'} with parameter $p(A)$. Then for all $w \in \bigcap_{i=1}^4 B_i$ and for $\bP^w$-a.e.\ $\cT(K)$,
\begin{equation*}
	\Phi_{(G', w')} \geq \frac{1}{16 A^2 \Delta^2 (\log n)^3 }.
\end{equation*}
\end{proposition}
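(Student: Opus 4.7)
The plan is to fix $w\in \bigcap_{i=1}^{4} B_i$ and a realisation of $\cT(K)$ of positive $\bP^w$-probability, and show that for every $S'\subset V(G')$ with $\pi(S')\le 1/2$ one has $\Phi_{(G',w')}(S') \ge c/(A^{2}\Delta(\log n)^{3})$ for an absolute constant $c>0$, which is stronger than the announced bound. Three structural facts will be used throughout: (i) $G'$ is connected, since $\cT\setminus\cT(K)$ descends to a spanning tree of $G'$; (ii) every super-vertex $U\in V(G')$ is a $\cT(K)$-connected component of $V$ of size $|U|\le\log n$ by $B_4$, and every connected component $H$ of $G\setminus V(\mathcal C_1)$ has $|H|\le\log n$ by $B_3$; (iii) for each such $H$, no open edge may leave $H$ (otherwise $H\subset V(\mathcal C_1)$), so $\cT$ must exit $H$ through a closed edge of $\cT(K)$, producing a \emph{straddling} super-vertex containing both an $H$-vertex and a $V(\mathcal C_1)$-vertex and touching $H$. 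Pull back $S'$ to $\widetilde S:=\bigcup_{U\in S'} U\subset V$ and write $\widetilde S=S_1\sqcup S_2$ with $S_1=\widetilde S\cap V(\mathcal C_1)$, $S_2=\widetilde S\setminus V(\mathcal C_1)$, and analogously $T_1,T_2$ from $\widetilde S^{c}$. The denominator is immediate: $\text{vol}_{w'}^{G'}(S')\le A\Delta\,|\widetilde S|$, since each $G$-vertex has degree $\le\Delta$ and each open edge has weight $\le A$.

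For the numerator I would assemble two disjoint families of edges crossing $S'/(S')^{c}$. \emph{Type 1} comes from the isoperimetric estimate $h_{\mathcal C_1}\ge 1/\log n$ in $B_2$ applied to $S_1\subset V(\mathcal C_1)$, yielding at least $\min(|S_1|,|T_1|)/\log n$ open $\mathcal C_1$-edges between $S_1$ and $T_1$, each of weight $\ge 1/A$ and each a genuine cut edge of $G'$. \emph{Type 2} comes from open edges inside the components $H$: writing $V'_H$ for the super-vertices intersecting $H$, the central dichotomy is that if $V'_H$ meets both $S'$ and $(S')^{c}$, then either $G'[V'_H]$ contains a cut edge (a Type 2 contribution) or every connected component of $G'[V'_H]$ lies entirely on one side of the partition; in the latter case, because interior $H$-super-vertices have all their $G'$-edges inside $V'_H$ by (iii), each such component must exit $V'_H$ via one of its own straddling super-vertices. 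Hence both $S'$ and $(S')^{c}$ contain straddling super-vertices for $H$, donating $V(\mathcal C_1)$-vertices to both $S_1$ and $T_1$. A one-sided version of the same argument handles components with $V'_H$ touched only from one side.

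The crux is then the following counting: writing $\mathcal H^{(a)}$ for the components producing a Type 2 cut edge and $\mathcal H_S=\{H:V'_H\cap S'\neq\emptyset\}$, every $H\in\mathcal H_S\setminus\mathcal H^{(a)}$ supplies a straddling super-vertex in $S'$ whose $V(\mathcal C_1)$-vertex enters $S_1$; since a single straddling super-vertex has size $\le\log n$ and thus touches at most $\log n$ components while distinct straddling super-vertices contribute disjoint $V(\mathcal C_1)$-vertices, one gets $|\mathcal H_S\setminus\mathcal H^{(a)}|\le|S_1|\log n$. Combined with $|S_2|\le|\mathcal H_S|\log n$, this gives $|\widetilde S|\le |S_1|(1+(\log n)^{2}) + |\mathcal H^{(a)}|\log n$. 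In the case $|S_1|\le|T_1|$, the cut weight is $\ge A^{-1}(|S_1|/\log n+|\mathcal H^{(a)}|)$ and the volume is $\le 2A\Delta\log n\cdot(|S_1|\log n+|\mathcal H^{(a)}|)$, and the elementary inequality $(x/\log n+y)/(x\log n+y)\ge(\log n)^{-2}$ yields the target bound on $\Phi_{(G',w')}(S')$. The symmetric case $|S_1|>|T_1|$ is handled via $\pi(S')\le 1/2 \Rightarrow \text{vol}^{G'}_{w'}(S')\le\text{vol}^{G'}_{w'}((S')^{c})\le A\Delta|\widetilde S^{c}|$ together with the analogous counting on the $(S')^{c}$-side. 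The main conceptual obstacle, and where the $B_3$ and $B_4$ hypotheses do all the work, is precisely this straddling-super-vertex counting: without it, $\widetilde S$ could accumulate huge mass in $V\setminus V(\mathcal C_1)$ while keeping $|S_1|$ vanishingly small, so that the $\mathcal C_1$-isoperimetry alone would be inadequate to control the ratio.
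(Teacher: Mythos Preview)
Your argument is correct and reaches the target bound by a route that is close in spirit to the paper's but differs in one meaningful technical point. The paper first establishes an isoperimetric inequality for the \emph{uncontracted} graph $G'' = G - (K\setminus\cT(K))$ (Lemma~\ref{L:isop}) and only afterwards passes to $G'$ by contraction. Its key step is a dichotomy on $|S_1|$ versus $M/(2\Delta)$, where $M$ counts the components of $G\setminus V(\mathcal C_1)$ meeting $S_2$: the pigeonholing uses the degree bound $\Delta$ in $G''$ to say that at most $\Delta|S_1|$ such components can be attached to $S_1$, so the remaining $\ge M/2$ must supply cut edges to $V(\mathcal C_1)\setminus S_1$. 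You instead work directly in $G'$ and replace the degree-based pigeonholing by your straddling--super-vertex count: each component $H$ that fails to produce a Type~2 cut edge forces a super-vertex in $S'$ meeting both $H$ and $V(\mathcal C_1)$, and because super-vertices have size $\le\log n$ (event $B_4$) and contribute disjoint $\mathcal C_1$-vertices to $S_1$, at most $|S_1|\log n$ components can be absorbed this way. The upshot is that your counting leans on $B_4$ where the paper leans on the degree bound, which is exactly why you save one power of $\Delta$ in the final constant; the paper's detour through $G''$ is slightly more modular (the unweighted isoperimetric estimate is isolated as a standalone lemma), but the two arguments are otherwise of comparable length and difficulty.
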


Note that although all edges in $G'$ are assigned weights in $[1/A, A]$, which makes the bottleneck ratio $\Phi_{(G', w')}$ comparable to the isoperimetric
constant $h_{G'}$ of the unweighted graph $G'$, the degree of vertices in $G'$ can be arbitrarily large due to the contraction of edges. Therefore, to prove Prop.~\ref{P:Cheeger}, we first lower bound the isoperimetric constant $h_{G''}$ of the graph
\be\label{eq:G''}
G'' = G''(K, \cT(K)) :=G -  \big( K\backslash \cT(K) \big),
\ee
i.e., the graph obtained by deleting edges in $K\backslash\cT(K)$, but  without contracting edges in $\cT(K)$, see Figure \ref{fig:ConDel}. We then show that contraction will increase the maximal degree by a factor of $\log n$, which decreases the isoperimetric constant by a factor of at most $(\log n)^c$.

We have the following lower bound on $h_{G''}$.

\begin{lemma} \label{L:isop}
Assume the setting as in Proposition~\ref{P:Cheeger}. For all $w \in \bigcap_{i=1}^4 B_i$ and $\bP^w$-a.e.\ $\cT(K)$, we have
\begin{equation*}
	h_{G''}  \geq \frac{1}{8 \Delta (\log n)^2}.
\end{equation*}
\end{lemma}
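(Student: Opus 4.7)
The plan is to show $|E_{G''}(S,S^c)|\ge |S|/(8\Delta(\log n)^2)$ for every $S\subset V$ with $1\le |S|\le n/2$; that is, to lower bound $h_{G''}(S)$. Throughout, let $S_1:=S\cap V(\mathcal C_1)$, $S_2:=S\setminus V(\mathcal C_1)$, and set $s=|S|$. I will use two structural facts: (a) every edge of $\mathcal C_1$ is open and hence lies in $G''$, while $\cT\subset G''$ makes $G''$ connected; (b) by $B_3$, every connected component of $G[V\setminus V(\mathcal C_1)]$ has at most $\log n$ vertices. The argument splits into a ``bulky'' case $|S_1|\ge s/(8\Delta\log n)$ and a ``sparse'' case $|S_1|<s/(8\Delta\log n)$.

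In the bulky case I invoke the Cheeger bound $h_{\mathcal C_1}\ge 1/\log n$ from $B_2$ on the set $S_1\subset V(\mathcal C_1)$, producing at least $\min(|S_1|,|V(\mathcal C_1)|-|S_1|)/\log n$ edges of $\mathcal C_1$ crossing $(S,S^c)$, all of which lie in $G''$. When $|S_1|\le |V(\mathcal C_1)|/2$ the minimum is $|S_1|\ge s/(8\Delta\log n)$, giving the required bound; when $|S_1|>|V(\mathcal C_1)|/2$, the estimate $|V(\mathcal C_1)|-|S_1|\ge 3n/4-n/2=n/4\ge s/2$ from $B_1$ yields an even stronger bound.

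The sparse case, where $|S_2|\ge 7s/8$, is the heart of the proof. The key idea is to partition $V\setminus V(\mathcal C_1)$ not into $G$-components but into $G''$-components: let $\tilde{\mathcal H}_1,\tilde{\mathcal H}_2,\ldots$ be the connected components of $G''$ restricted to $V\setminus V(\mathcal C_1)$. Since each lies inside some $H_i$ from $B_3$, we have $|\tilde{\mathcal H}_k|\le \log n$, so at least $7s/(8\log n)$ of them intersect $S$. Because $G''$ is connected, each $\tilde{\mathcal H}_k$ must send at least one $G''$-edge to $V(\mathcal C_1)$ (a closed edge belonging to $\cT$). I then count cross edges in two disjoint groups: each $\tilde{\mathcal H}_k$ straddling $S$ and $S^c$ contributes at least one internal $G''$-edge crossing $(S,S^c)$ by its own connectedness, while each $\tilde{\mathcal H}_k\subset S$ contributes at least one outgoing $G''$-edge to $V(\mathcal C_1)$. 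Moreover, at most $\Delta|S_1|$ outgoing edges in total can land in $S_1$, since each $u\in S_1$ has at most $\Delta$ $G$-neighbours outside $V(\mathcal C_1)$. Adding the two contributions gives $|E_{G''}(S,S^c)|\ge 7s/(8\log n)-\Delta\cdot s/(8\Delta\log n)=3s/(4\log n)$, which dominates $s/(8\Delta(\log n)^2)$.

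The main obstacle I foresee is precisely this sparse case. Natural attempts using the $G$-components $H_i$ of $V\setminus V(\mathcal C_1)$ or a fixed anchor map $v\mapsto a(v)\in V(\mathcal C_1)$ break down on $H_i$'s that are internally disconnected in $G''$: a straddling $H_i$ may have all of its internal cross edges closed and outside $\cT$, in which case the corresponding crossing in $G''$ must be realised externally, and it becomes awkward to control which of the anchor endpoints land in $S$ versus $S^c$. Replacing the $H_i$'s by the finer $G''$-components $\tilde{\mathcal H}_k$ dissolves this difficulty, because internal connectedness inside $G''$ is then guaranteed by construction and the straddling components automatically supply internal crossings, reducing what remains to the clean two-term bookkeeping above.
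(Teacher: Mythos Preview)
Your proof is correct and follows the same overall strategy as the paper: split $S$ into $S_1=S\cap V(\mathcal C_1)$ and $S_2$, treat the case of large $|S_1|$ via the isoperimetry of $\mathcal C_1$ (event $B_2$, with $B_1$ handling the sub-case $|S_1|>|V(\mathcal C_1)|/2$), and in the small-$|S_1|$ case count the small components of $V\setminus V(\mathcal C_1)$ meeting $S$, using connectivity of $G''$ and the degree bound $\Delta$ to show that most of them must contribute a boundary edge.

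The one genuine difference is the decomposition you use in the sparse case. The paper works with the $G$-components $L_1,\dots,L_M$ of $V\setminus V(\mathcal C_1)$ and the threshold $|S_1|\ge M/(2\Delta)$; it argues that at least $M/2$ of the $L_i$ have no $G$-edge to $S_1$, and for each such $L_i$ a short $G''$-path from a vertex of $L_i\cap S$ to $V(\mathcal C_1)\setminus S_1$ (which stays in $L_i$ until its last step) produces a cross edge with an endpoint in $L_i$, so that distinct $L_i$ give distinct cross edges. You instead refine to the $G''$-components $\tilde{\mathcal H}_k$ of $V\setminus V(\mathcal C_1)$, which sidesteps that path argument entirely: a straddling $\tilde{\mathcal H}_k$ contributes an internal cross edge immediately by its own $G''$-connectedness, and a contained $\tilde{\mathcal H}_k\subset S$ contributes via its outgoing $G''$-edge to $V(\mathcal C_1)$, with at most $\Delta|S_1|$ of these absorbed by $S_1$. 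Your version is a clean way to make explicit the point the paper compresses into the clause ``and hence $E(S,S^c)\ge M/2$''; the paper's version has the slight advantage of a self-adjusting threshold (in terms of $M$) but requires the reader to supply the path argument. Both yield the same bound.
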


\begin{proof}
Fix an arbitrary realisation of $\cT(K)$ under the law $\bP^w$, which determines the graph $G''$.
Let $S \subseteq V$ be any vertex set with $1\leq |S|\leq n/2$, which can be decomposed into $S_1 := S \cap V(\mathcal C_1)$ and $S_2 = S \backslash S_1$. Since the event $B_3$ occurs, $G \backslash V(\mathcal C_1)$ consists of disconnected components of size at most $\log n$. Let $L_1, \ldots, L_M$ denote the
components of $G \backslash V(\mathcal C_1)$ that contain some vertex in $S_2$. Then $|S_2|\leq M \log n$.

Note that $E(S, S^c)$ in particular contains all edges in $\mathcal C_1$ that connect $S_1$ to $V(\mathcal C_1)\backslash S_1$, which we denote here by $E_{\mathcal C_1}(S_1, S_1^c)$. Since the event $B_2$ occurs, we can use the isoperimetric constant of $\mathcal C_1$ to obtain
\be\label{eq:EC1S1}
\begin{aligned}
	|E_{\mathcal C_1}(S_1, S_1^c)| & \geq \frac{1}{\log n} \big( \ind_{\{|S_1|\leq |V(\mathcal C_1)|/2\}} |S_1| + \ind_{\{|V(\mathcal C_1)|/2< |S_1| \leq n/2 \}} |V(\mathcal C_1)\backslash S_1|\big) \\
	& \geq \frac{|S_1|}{2\log n},
\end{aligned}
\ee
where in the second line we used that, on the event $B_1$, the giant component satisfies $|V(\mathcal C_1)| \geq 3n/4$.

To bound $E(S, S^c)$, we distinguish between two cases. For the case $|S_1| \geq \frac{M}{2\Delta}$, we have
\begin{equation*}
	\frac{|E(S, S^c)|}{|S|} \geq \frac{|E_{\mathcal C_1}(S_1, S_1^c)|}{|S_1|+ M \log n} \geq \frac{\frac{1}{2\log n}}{1+ \frac{M \log n}{|S_1|}} \geq \frac{1}{2\log n(1+2\Delta\log n)} \geq \frac{1}{8\Delta (\log n)^2},
\end{equation*}
which satisfies the desired bound on $h_{G''}$.

We now consider the case $|S_1| < \frac{M}{2\Delta}$. Note that regardless of the realisation of $\cT(K)$,
$G''$ must remain a connected graph because $\cT$ connects all vertices in $V$. Therefore, for each $L_i$,
$1\leq i\leq M$, there is at least one edge in $G''$ connecting $L_i$ to $\mathcal C_1$. Since $G''$ has maximal degree $\Delta$, each vertex in $V(\mathcal C_1)$ can
connect to at most $\Delta$ different $L_i$'s. It follows that $S_1$ can connect to at most $\Delta |S_1|< M/2$ different $L_i$'s. Therefore, at least $M/2$ components among $L_1, \ldots, L_M$ are connected to some vertex in $V(\mathcal C_1)\backslash S_1$, and hence $E(S, S^c) \geq M/2$. It follows that
\begin{equation*}
	\frac{|E(S, S^c)|}{|S|} \geq \frac{M/2}{|S_1|+ M \log n} \geq  \frac{M/2}{\frac{M}{2\Delta}+ M \log n} \geq \frac{1}{1+2 \log n},
\end{equation*}
which also satisfies the desired bound on $h_{G''}$.

Since the above bounds hold for all $S\subset V$ with $1\leq |S|\leq n/2$ and are uniform in the realisation of $\cT(K)$, Lemma \ref{L:isop} follows.
\end{proof}

We are now ready to prove Prop.~\ref{P:Cheeger}.

\begin{proof}[Proof of Proposition \ref{P:Cheeger}]
Fix an arbitrary realisation of $\cT(K)$ under the law $\bP^w$, which determines the graphs $G''$ and $G'$. Recall that $G'$ is constructed from $G''$ by contracting all edges in $\cT(K)$, which is equivalent to contracting each connected component in the forest $\cT(K)$ into a single vertex. On the event $B_4$, the connected components formed by closed edges have size at most $\log n$. Since $G''$ has maximum degree $\Delta$, after
contraction, $G'$ has maximum degree at most $\Delta \log n$.

Let $S\subset V(G')$, and let $\tilde S\subset V(G'')$ be the pre-image of $S$ under the contraction that generates $G'$ from $G''$.
On the event $\bigcap_{i=1}^4 B_i$, it holds that $h_{G''}\geq 1/8\Delta (\log n)^2$ by Lemma \ref{L:isop}. We then have
\be\label{eq:PhiG'}
|E_{G'}(S,S^c)| = |E_{G''}(\tilde{S}, \tilde{S}^c)| \geq \frac{1}{8 \Delta (\log n)^2} \min \{ |\tilde{S}|, |\tilde{S}^c| \},
\ee
where $E_{G'}(S, S^c)$ denotes the set of edges in $G'$ connecting $S$ and $S^c$. To lower bound the bottleneck ratio $\Phi_{(G', w')}$ for the weighted graph $(G', w')$, note that
\begin{equation*}
	\sum_{e\in E_{G'}(S, S^c)} w_e  \geq \frac{1}{A}\, |E_{G'}(S,S^c)|,
\end{equation*}
since all edges in $G''$, and hence in $G'$, have weights in $[1/A, A]$. On the other hand, for $S'= S$ or $S^c$, because vertices in $G'$
have degree at most $\Delta \log n$, we have
$$
\sum_{x\in S', y\in V(G')} w_{\{x, y\}} \leq A \Delta \log n \cdot |S'| \leq A \Delta \log n \cdot |\tilde S'| \,.
$$
Together with \eqref{eq:PhiG'}, this implies that, for all $S\subset V(G')$,
$$
\sum_{e\in E_{G'}(S, S^c)} w_e  \geq \frac{1}{8A^2 \Delta^2 (\log n)^3} \min \Big\{ \sum_{x\in S, y\in V(G')} w_{\{x, y\}}, \sum_{x\in S^c, y\in V(G')} w_{\{x, y\}} \Big\}.
$$
By \eqref{eq:PhiG2}, this implies $\Phi_{(G', w')} \geq \frac{1}{16 A^2 \Delta^2 (\log n)^3}$.
\end{proof}

\section{Proof of Theorem \ref{T:main} for Expanders} \label{S:pf}

We follow the same notation as in Section \ref{S:conditioning}, where, given the edge weights $w=(w_e)_{e\in E}$, an edge $e$ is open if $w_e\in [1/A, A]$ and closed otherwise. The set of closed edges is denoted by $K$. Let $A$ be chosen as in Proposition~\ref{P:Cheeger} such that the events $(B_i)_{1\leq i\leq 4}$ defined in \eqref{eq:B1234} hold jointly with probability at least $1-C/n^\eta$ for some $\eta>0$ that can be chosen arbitrarily.
From now on we will assume $w\in \bigcap_{i=1}^4 B_i$, provided $\gamma$ in Theorem \ref{T:main} is chosen to satisfy $\gamma <\eta$.

Given $w$, and for any realisation of $\cT(K)$ under $\bP^w$, let $(G', w')$ be defined as in \eqref{eq:T'}. Let $V(G')$ and $E(G')$ denote the
vertex and edge set of $G'$. By Proposition~\ref{P:Cheeger}, we have \begin{equation} \label{eq:cheeg_final_thm}
\Phi_{(G', w')} \geq \frac{1}{16 A^2 \Delta^2 (\log n)^3}
\end{equation}
uniformly in $w\in \bigcap_{i=1}^4 B_i$ and for $\bP^w$-a.e.\ $\cT(K)$.

We will now apply Theorem \ref{T:Nach} to $(G', w')$.
Let us define the quantities $D,\alpha,\theta$ involved in conditions \eqref{eq:balanced}, \eqref{eq:mixing} and \eqref{eq:escaping}:
\begin{itemize}[leftmargin=*]
\item Condition \eqref{eq:balanced}:
since the vertices in $G'$ have degree at most $\Delta \log n$ as noted in the proof of Proposition~\ref{P:Cheeger}, and the edge weights all lie in
$[1/A, A]$, it is clear that we can take $D :=\Delta A^2 \log n$.
\item Condition \eqref{eq:mixing}: if $\pi$ denotes the stationary distribution of the lazy random walk on $(G', w')$, then 
\begin{equation}
    \pi_{\rm min} =\min_{v\in V(G')} \pi(v) \geq \frac{\frac{1}{A}}{2 A |E(G')|} \geq \frac{1}{A^2 \Delta n}.
\end{equation}
Then Theorem \ref{T:Cheeger} and \eqref{eq:cheeg_final_thm} together imply
\begin{equation*}
	\tmix(G', w') \leq C(A, \Delta) (\log n)^{7}
\end{equation*}
for some constant $C(A, \Delta)$ depending only on $A$ and $\Delta$. Therefore, we may choose any $\alpha\in (0,1/2)$.
\item Condition \eqref{eq:escaping}: to identify $\theta$ we use the trivial bound
\begin{equation}\label{eq:tmixG'}
	\sum_{t=0}^{\tmix(G', w')} (t+1) \sup_{v \in V(G')}q_t (v,v) \leq (\tmix(G', w')+1)^2 \leq (C(A, \Delta)+1)^2 (\log n)^{14} =:\theta\,.
\end{equation}
\end{itemize}
Denote now $m:=|V(G')|$, which, thanks to the event $B_4$, satisfies $n/\log n \leq m\leq n$. Since the above choices of $D, \alpha, \theta$ are uniform for $\bP^w$-a.e.\ $\cT(K)$, Theorem \ref{T:Nach} implies that  there exist $\gamma' \in (0, \eta)$, $c_1 = c_1(A,\Delta) = c_1(\mu, b, \Delta)$ and $c_2 > 0$ such that for all $n \geq 2$ and any $\eps>n^{-\gamma'}$,
\begin{equation} \label{eq:contracted_diameter}
\bP^w \Big( \frac{\sqrt{m}}{c_1(\epsilon^{-1} \log n)^{c_2}} \leq \mathrm{diam}(\cT_{(G', w')}) \leq \sqrt{m} c_1(\epsilon^{-1} \log n)^{c_2} \Big) \geq 1 - \epsilon\,.
\end{equation}
This bound is uniform for $w\in \bigcap_{i=1}^4 B_i$, and the complement of this event has probability at most $n^{-\eta}\ll \eps$. To conclude the proof of Theorem \ref{T:main}, it only remains to translate the bounds on $\mathrm{diam}(\cT_{(G', w')})$ into bounds on $\mathrm{diam}(\cT_{(G, w)})$.

By the spatial Markov property in Lemma \ref{L:SMarkov}, we can couple $\cT=\cT_{(G, w)}$ and $\cT_{(G', w')}$ such that $\cT=\cT_{(G', w')}\cup \cT(K)$. To recover $\cT$ from $\cT_{(G', w')}$, we need to undo the contraction of edges in $\cT(K)\subset K$, which consists of disjoint trees
of size at most $\log n$. Since the contraction of these trees in $\cT$ into single vertices decreases the length of paths in $\cT$, we have $\mathrm{diam}(\cT_{(G', w')}) \leq \mathrm{diam}(\cT)$. In the other direction, when we take a path in $\cT_{(G', w')}$ and undo the contraction, the worst case is when each vertex along the path is replaced by a path of length $\log n$, so $\mathrm{diam}(\cT) \leq \log n \cdot \mathrm{diam}(\cT_{(G', w')})$. These bounds and the fact that $n/\log n \leq m\leq n$ readily imply Theorem \ref{T:main}, where we can pick $0 < \gamma \leq \gamma'$ and enlarge $c_2$ to absorb the extra log factor.

\begin{remark}\label{R:universalexp}
Notice that in \eqref{eq:contracted_diameter} the exponent of the $\log n$ corrections is independent of $b$, $\Delta$ and $\mu$, 
and $\gamma$ can be taken independent of all parameters. In fact, we only impose $\gamma<\eta$, for an arbitrary large $\eta$, and $\gamma$ less than a small constant times $\alpha$ (see Appendix \ref{S:app}), where $\alpha$ can be chosen to be anything below $1/2$.
\end{remark}

\section{Finite Boxes in \texorpdfstring{$\Z^d$}{Zd}} \label{S:Zd}

Consider the lattice $\Z^d$ with edge set $E^d:= \{ \{x,y\}: x, y\in \Z^d, \|x - y\|_1 = 1\}$. For integers $L \geq 1$, consider the induced graph $G_n$ with $n = (2L+1)^d$ many vertices, defined by taking the vertex set $V_n = \Z^d \cap [-L, L]^d$. For notational sake, we shall sometimes drop the subscript $n$. To bound the diameter of the UST on $G_n$ with random edge weights, we will follow the same strategy as for expander graphs in Sections
\ref{S:expbound} and \ref{S:pf}. The key difference is that for finite boxes in $\Z^d$ with $n$ vertices, it is known that the mixing time is of order $n^{2/d}$ (see e.g.\ \cite[Theorem 1.1]{BM03}). Therefore, when we bound the parameter
$\theta$ in \eqref{eq:escaping} for the graph $(G', w')$ (recall from Section \ref{S:conditioning}), we can no longer apply the crude bound
\begin{equation*}
\sum_{t=0}^{\tmix} (t+1) \sup_{v \in V}q_t(v,v) \leq (\tmix + 1)^2
\end{equation*}
as we did in \eqref{eq:tmixG'} for expander graphs. Instead, we need to apply sharper bounds on the lazy random walk transition kernel $q_t(\cdot,\cdot)$. This will be achieved by replacing the notion of bottleneck ratio $\Phi_{(G, w)}$ in \eqref{eq:PhiG} with the notion of bottleneck profile $\Phi_{(G, w)}(r)$ in \eqref{eq:BottleProfile} below.

\subsection{Heat kernel estimates}

For a finite connected weighted graph $(G, w)$, let $\pi$ denote the stationary distribution of the lazy random walk on $(G, w)$ defined as below of \eqref{eq:lazy}, and let $\pi_{\min} := \min_{v \in V} \pi(v)$. Recall from \eqref{eq:PhiGS2} that for non-empty $S\subset V$,
$\Phi_{(G, w)}(S)= \frac{1}{\pi(S)} \sum_{x\in S, y\in S^c} \pi(x) q(x,y)$. We then define the {\em bottleneck profile} by
\begin{equation} \label{eq:BottleProfile}
\Phi_{(G, w)}(r) :=
\left\{
\begin{aligned}
	\min\limits_{0 < \pi(S) \leq r} \Phi_{(G, w)}(S) & \qquad \mbox{if } \pi_{\rm min} \leq r \leq 1/2, \\
	\Phi_{(G, w)}(1/2) \qquad & \qquad \mbox{if } r> 1/2.
\end{aligned}
\right.
\end{equation}
Note that $\Phi_{(G, w)}(r)$ is decreasing in $r$, and $\Phi_{(G, w)}(1/2) = \Phi_{(G, w)}$, the bottleneck ratio defined in \eqref{eq:PhiG}. We will need the following result from \cite{MP05},
which can be thought of as a strengthening of the upper bound on the mixing time in Theorem \ref{T:Cheeger}.
\begin{theorem}[Theorem 1 in \cite{MP05}] \label{T:HeatCheeger}
Let $u,v\in V(G)$, and let $(q_t(u,v))_{t\geq 0}$ be the transition kernel of the lazy random walk on $(G,w)$. If
\begin{equation*}
	t \geq 1 + \int_{4 \min\{\pi(u), \pi(v)\}}^{\frac{4}{\xi}} \frac{4}{r \Phi_{(G, w)}^2(r)} dr,
\end{equation*}
then
\begin{equation*}
	\Big| \frac{q_t(u,v)}{\pi(v)} -1 \Big| \leq \xi\,.
\end{equation*}
\end{theorem}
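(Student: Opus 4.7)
The plan is to follow the evolving-sets strategy of Morris and Peres. Define the evolving-set Markov chain $(S_t)_{t\geq 0}$ on non-empty subsets of $V$, starting from $S_0=\{u\}$, by sampling $U\sim\mathrm{Unif}[0,1]$ independently at each step and setting
$$
S_{t+1}:=\Big\{y\in V:\tfrac{Q(S_t,y)}{\pi(y)}\geq U\Big\},\qquad Q(S,y):=\sum_{x\in S}\pi(x)q(x,y).
$$
A direct verification (by induction on $t$) establishes the martingale property $\mathbb{E}[\pi(S_{t+1})\mid S_t]=\pi(S_t)$ together with the fundamental identity
$$
\frac{q_t(u,v)}{\pi(v)}=\frac{1}{\pi(u)}\,\mathbb{E}_{\{u\}}\bigl[\mathbf{1}\{v\in S_t\}\bigr],
$$
which reduces the desired pointwise heat-kernel estimate to a quantitative control on how concentrated $S_t$ is near its absorbing limits $\{\emptyset,V\}$.

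The heart of the proof is a \emph{square-root contraction} for the functional $\Psi(S):=\sqrt{\pi(S)}+\sqrt{1-\pi(S)}-1$: by writing $\pi(S_{t+1})$ and $1-\pi(S_{t+1})$ as integrals in the threshold $U$ and applying Cauchy--Schwarz to each piece, one obtains, for every non-empty $S\subsetneq V$,
$$
\mathbb{E}\bigl[\Psi(S_{t+1})\bigm|S_t=S\bigr]\leq \Psi(S)\bigl(1-\tfrac{1}{8}\Phi_{(G,w)}(S)^2\bigr).
$$
Using the monotonicity of $\Phi_{(G,w)}(r)$ in $r$, one may replace $\Phi_{(G,w)}(S)$ by $\Phi_{(G,w)}(\min\{\pi(S),1-\pi(S)\})$. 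Iterating and taking expectations then yields
$$
\mathbb{E}\bigl[\Psi(S_t)\bigr]\leq \Psi(\{u\})\exp\Bigl(-\tfrac{1}{8}\sum_{s=0}^{t-1}\Phi_{(G,w)}\bigl(\min\{\pi(S_s),1-\pi(S_s)\}\bigr)^2\Bigr).
$$

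To pass from the discrete iteration to the continuous integral in the hypothesis, decompose dyadically in $\min\{\pi(S_s),1-\pi(S_s)\}$: the martingale property together with the contraction above shows that while this quantity lies in a shell $[r,2r]$, the process requires at least $c/\Phi_{(G,w)}(r)^2$ steps to escape it. Summing over dyadic shells between $4\min\{\pi(u),\pi(v)\}$ and $4/\xi$ — the lower endpoint being the initial scale of $\pi(S_0)$ and the upper endpoint being the scale at which Markov's inequality delivers $\xi$-control — yields
$$
\sum_{s=0}^{t-1}\Phi_{(G,w)}\bigl(\min\{\pi(S_s),1-\pi(S_s)\}\bigr)^2 \;\gtrsim\; \int_{4\min\{\pi(u),\pi(v)\}}^{4/\xi}\frac{dr}{r\,\Phi_{(G,w)}(r)^2}.
$$
Combining with $\Psi(\{u\})\leq \sqrt{\pi(u)}$, the fundamental identity, and a symmetric argument started from $\{v\}$ that invokes reversibility $q_t(u,v)=q_t(v,u)\pi(v)/\pi(u)$ (which is why $\min\{\pi(u),\pi(v)\}$ appears rather than $\pi(u)$ alone), this converts into the pointwise bound $|q_t(u,v)/\pi(v)-1|\leq\xi$.

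The principal obstacle is the square-root contraction itself: it relies on expressing $\mathbb{E}[\sqrt{\pi(S_{t+1})}\mid S_t=S]$ as an integral over thresholds $U$ and applying Cauchy--Schwarz carefully to each piece so as to upgrade the trivial linear-in-$\Phi_{(G,w)}(S)$ bound (obtainable from Jensen alone) to the sharp quadratic-in-$\Phi_{(G,w)}(S)^2$ one. A secondary technical difficulty is the dyadic bookkeeping needed to translate the discrete sum into the continuous integral, controlling the random trajectory of the martingale $\pi(S_s)$ through its dyadic scales; this is where the specific integration limits $4\min\{\pi(u),\pi(v)\}$ and $4/\xi$ in the statement naturally arise.
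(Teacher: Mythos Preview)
The paper does not prove this theorem; it is quoted verbatim as Theorem~1 of Morris--Peres \cite{MP05} and used as a black box. Your sketch is a faithful high-level outline of the evolving-sets argument in that reference, so there is nothing in the paper to compare against beyond the citation itself.
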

We note that the above result also gives a bound on the mixing time. For our purposes, we will only use it to bound $q_t(v, v)$ by choosing an appropriate $\xi = \xi(t)$.

Let $G=(V, E)$ be the box $[-L, L]^d \cap \Z^d$ with $n$ vertices, and let $w=(w_e)_{e\in E}$ be i.i.d.\ edge weights as in Theorem \ref{T:main}.
Let $(G', w')$ be defined as in \eqref{eq:T'}, which is obtained from $(G, w)$ by conditioning on $\cT(K)$, the UST on $(G, w)$ restricted to edges $e\in E$ with $w_e\notin [1/A,A]$, which are called closed edges in a percolation process with parameter $p = p(A) = \mu([1/A,A])$. To prove Theorem \ref{T:main} for finite boxes on $\Z^d$ with $d\geq 5$, the main technical ingredient is the following bound on the bottleneck profile for $(G', w')$.

\begin{lemma} \label{L:PhiLattice}
Let $\eta > 0$. There exist constants $C = C(d,\eta) > 0$ and $p^* = p^*(d, \eta) < 1$ such that if $p(A) > p^*$, then there is an event $\tilde{\mathcal{B}}$ (see \eqref{eq:tildeB1234} below)
with $\P(w\in \tilde{\mathcal{B}}) \geq 1 - n^{-\eta}$ such that if $w \in \tilde{\mathcal{B}}$, then for $\bP^w$-a.e.\ $\cT(K)$ and any $r \geq \pi_{\rm min}$, we have
\begin{equation*}
	\Phi_{(G', w')}(r) \geq  \frac{C}{A^4 (\log n)^{d+4}} \Big(\frac{\pi_{\rm min}}{r}\Big)^{1/d}.
\end{equation*}
\end{lemma}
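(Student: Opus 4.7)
I would mimic the strategy of Proposition \ref{P:Cheeger} while tracking the full isoperimetric \emph{profile} of the giant open cluster $\mathcal C_1$ rather than only its Cheeger constant, so as to capture the $(\pi_{\min}/r)^{1/d}$ behaviour characteristic of $\Z^d$. Take $\tilde{\mathcal B}$ to be the intersection of the $\Z^d$-box adaptations of $B_1$, $B_3$, $B_4$ from \eqref{eq:B1234}, together with the isoperimetric-profile strengthening of $B_2$
\begin{equation*}
\tilde B_2 := \Big\{\, |E_{\mathcal C_1}(S, S^c)| \geq c_d (\log n)^{-\kappa}\, |S|^{(d-1)/d}\ \text{for every } S \subset V(\mathcal C_1) \text{ with } |S| \leq |V(\mathcal C_1)|/2\,\Big\}.
\end{equation*}
For the box the first three events can be re-established by straightforward modifications of Lemmas \ref{L:Giant}, \ref{L:scluster}, \ref{L:RMC1}: the Peierls-type union bounds use only the $\Z^d$ edge-isoperimetric inequality $|E(S, S^c)| \gtrsim \min(|S|, |S^c|)^{(d-1)/d}$ rather than the $b$-expander hypothesis, and the restriction to $|H| \leq \log n$ in Lemmas \ref{L:scluster}--\ref{L:RMC1} keeps the relevant sums summable. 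The profile event $\tilde B_2$ is the finite-box analogue of the well-known isoperimetric inequality for supercritical percolation clusters (cf.\ Benjamini--Mossel, Mathieu--Remy, Pete); once $p(A)$ is close enough to $1$, it holds with probability $\geq 1 - n^{-\eta}$ via a Peierls/renormalisation argument run scale by scale in $|S|$.

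\textbf{From $\mathcal C_1$ to $G''$ to $G'$.} Fixing $w \in \tilde{\mathcal B}$, any realisation of $\cT(K)$, and $S \subset V$ with $|S| = k \leq n/2$, I would decompose $S = S_1 \sqcup S_2$ with $S_1 = S \cap V(\mathcal C_1)$ and run the case analysis of Lemma \ref{L:isop} at scale $k$. If $|S_1| \geq k/(8 \Delta \log n)$, apply $\tilde B_2$ to whichever of $S_1$ or $V(\mathcal C_1) \setminus S_1$ has size at most $|V(\mathcal C_1)|/2$; since $S_2 \cap V(\mathcal C_1) = \emptyset$, the resulting cut edges lie in $\mathcal C_1 \subset G''$ and connect $S$ to $S^c$, producing $\gtrsim k^{(d-1)/d}/(\log n)^{\kappa + (d-1)/d}$ boundary edges. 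Otherwise $|S_2| \geq k/2$, and the counting argument of Lemma \ref{L:isop} (at most $\Delta |S_1|$ of the $\gtrsim k/\log n$ small components $L_i$ meeting $S_2$ can attach purely inside $S_1$) yields $|E_{G''}(S, S^c)| \gtrsim k/\log n \geq k^{(d-1)/d}/\log n$. Hence $|E_{G''}(S, S^c)| \gtrsim k^{(d-1)/d}/(\log n)^{\kappa'}$. Passing from $G''$ to $G'$ by contracting the closed-cluster subforest $\cT(K)$ (components of size $\leq \log n$ on $B_4$) preserves cut sizes, bounds the maximum $G'$-degree by $\Delta \log n$, and satisfies $|S| \leq |\widetilde S| \leq (\log n)|S|$ for the pre-image $\widetilde S \subset V(G'')$ of any $S \subset V(G')$.

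\textbf{Conversion to the weighted profile; main obstacle.} Using $w_e \in [1/A, A]$ and the degree bound, for $S \subset V(G')$ I obtain
\begin{equation*}
\Phi_{(G', w')}(S) \ \geq\ \frac{|E_{G'}(S, S^c)|}{2 A^2 \Delta (\log n)\, |S|} \ \geq\ \frac{c}{A^2 (\log n)^{\kappa''}}\, |S|^{-1/d},
\end{equation*}
after absorbing the ratio $|\widetilde S|/|S| \in [1, \log n]$ into the log factors. Combining this with $\pi(S) \geq |S|/(2 A^2 |E(G')|)$ and the trivial bound $\pi_{\min} \leq C A^2 (\log n)^2/n$ in a box then converts $|S|^{-1/d}$ into $(\pi_{\min}/\pi(S))^{1/d}$ at the cost of further $A^{O(1/d)}$ and $(\log n)^{O(1/d)}$ factors, which fit comfortably under the announced $A^4 (\log n)^{d+4}$; the case $r > 1/2$ is immediate from the definition of $\Phi_{(G', w')}(r)$. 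The main technical obstacle lies in Step 1: unlike the single Cheeger-type bound in Lemma \ref{L:C1}, one must rule out bad subsets of $V(\mathcal C_1)$ simultaneously at \emph{every} scale $k$, requiring a careful Peierls union bound over all connected boundary surfaces at each scale---a standard but non-trivial input from the supercritical-percolation literature, adapted to the finite box so as to preserve the polynomial failure probability $n^{-\eta}$.
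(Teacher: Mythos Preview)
Your proposal follows the paper's approach closely: define $\tilde{\mathcal B}$ as the intersection of the four events in \eqref{eq:tildeB1234}, establish the profile bound $|E_{G''}(S,S^c)| \gtrsim (\log n)^{-c}|S|^{(d-1)/d}$ via the $S_1/S_2$ decomposition and the two-case analysis of Lemma \ref{L:isop}, then contract and convert to the weighted bottleneck profile exactly as you describe. The paper also identifies the isoperimetric profile of $\mathcal C_1$ (your $\tilde B_2$) as the main input, citing Benjamini--Mossel and Pete for Lemma \ref{L:IsoSuperCrit}.

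Two small technical corrections. First, in the box the Peierls argument for $\tilde B_3$ only delivers components of size $\leq (\log n)^{d/(d-1)}$, not $\leq \log n$: the $\Z^d$ isoperimetry gives $|\partial_E H| \geq |H|^{(d-1)/d}$ rather than $\gtrsim |H|$, so to make the union bound over $*$-connected boundaries summable one needs $|\partial_E H| \geq \log n$, i.e.\ $|H| \geq (\log n)^{d/(d-1)}$. This only inflates the polylog exponents in your case analysis. Relatedly, the paper phrases $\tilde B_2$ with a constant prefactor $c_0$ above the threshold $|S| \geq (\log n)^{d^2/(d-1)}$ and handles smaller $S$ by the trivial bound $|E_{\mathcal C_1}(S,S^c)| \geq 1 \geq (\log n)^{-d}|S|^{(d-1)/d}$; this is equivalent to your all-scale formulation with $\kappa = d$ and avoids the need for a scale-by-scale Peierls argument.

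Second, in the passage from $G''$ to $G'$ you must allow for $|\tilde S| > n/2$, since $\pi(S) \leq 1/2$ does not force $|\tilde S| \leq n/2$. The paper handles this by applying the $G''$ bound to $\min\{|\tilde S|,|\tilde S^c|\}$ and then using $|\tilde S^c|/|\tilde S| \geq (2dA^2(\log n)^2)^{-1}$ (from $\pi_{\min}/\pi_{\max} \geq 1/D$ and the contraction factor $\log n$); this step contributes the extra $A^2$ that turns your $A^{-2}$ into the announced $A^{-4}$.
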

We will postpone the proof of Lemma \ref{L:PhiLattice} to Section \ref{S:bnprofile}.

\begin{proof}[Proof of Theorem \ref{T:main} for finite boxes in $\Z^d$]
Let $G=(V, E)$ be the box $[-L, L]^d \cap \Z^d$ with $n$ vertices, and let $w=(w_e)_{e\in E}$ be i.i.d.\ edge weights with common distribution $\mu$.
We proceed as in the proof of Theorem \ref{T:main} for expander graphs in Section \ref{S:pf}. As in Section \ref{S:conditioning}, we couple the edge weights $w=(w_e)_{e\in E}$ to a percolation process with parameter $p = p(A) = \mu([1/A,A])$, and let
$(G', w')$ be defined from $(G, w)$ as in \eqref{eq:T'}. We will again verify the three conditions of Theorem \ref{T:Nach} for $(G',w')$. In what follows, $\eta>0$ can be chosen arbitrarily, and $C$ denotes a generic constant depending only on $d$ and $\eta$, whose precise value may change from line to line.

\begin{itemize}[leftmargin=*]
	\item Condition \eqref{eq:balanced}:
	The maximum degree in $G$ is $2d$, and hence by Lemma \ref{L:scluster}, we can choose $A$ large enough such that with probability at least $1-n^{-\eta}$, every cluster of closed edges is of size at most $\log n$. Following the notation in \eqref{eq:B1234}, we denote the set of such edge weights configurations by $\tilde B_4$ (see also \eqref{eq:tildeB1234}). Then for $w\in \tilde B_4$, $(G', w')$ satisfies condition \eqref{eq:balanced} with $D = 2 d A^2 \log n$.
	
	\item Condition \eqref{eq:mixing}: Let $w \in \tilde B_4 \cap \tilde{\mathcal B}$ for the event $\tilde{\mathcal B}$ in Lemma \ref{L:PhiLattice}. The stationary distribution of the lazy random walk on $(G', w')$ then satisfies
	\begin{equation}\label{eq:piminmax}
		\frac{1}{nD} = \frac{1}{2d A^2 n \log n} \leq \pi_{\rm min} \leq \pi_{\rm max} \leq \frac{D}{n} = \frac{2d A^2 \log n}{n}.
	\end{equation}
	Using Lemma \ref{L:PhiLattice} with $r=1/2$ and Theorem \ref{T:Cheeger} gives
	\begin{equation}\label{eq:tmixG'w'}
		\begin{aligned}
			\tmix(G',w')
			\leq C A^8(\log n)^{2d+8} \pi_{\rm min}^{-2/d} \log (\pi_{\rm min}^{-1})
			\leq C A^{8 + 4/d + 1} (\log n)^{2d+8 + 2/d +1} n^{2/d}\,.
		\end{aligned}
	\end{equation}
	Since $d \geq 5$, condition \eqref{eq:mixing} is satisfied for some $\alpha > 0$.
	\item Condition \eqref{eq:escaping}: Assume again that $w\in \tilde B_4 \cap \tilde{\mathcal B}$, and let
	$$
	\xi = \xi(t) =  C_1 A^{4d} (\log n)^{d(d+4)} \pi_{\rm min}^{-1} (t-1)^{-d/2}
	$$
	for some large $C_1$ to be determined later. For small $t \geq 2$ that satisfies $\pi(v) \geq \xi(t)^{-1}$, we use the trivial bound $q_t(v,v) \leq 1 \leq (\xi(t) + 1) \pi(v)$. To bound $q_t(v, v)$ for larger $t$ with $\pi(v) < \xi(t)^{-1}$, we use the bound from Lemma \ref{L:PhiLattice}
	\begin{align*}
		1 + \int_{4\pi(v)}^{\frac{4}{\xi}} \frac{4}{r \Phi_{(G', w')}^2(r)} dr
		& \leq 1 + C A^{8} (\log n)^{2d+8} \pi_{\rm \min}^{-2/d} \int_{4\pi(v)}^{\frac{4}{\xi}} r^{2/d -1} dr \\
		& \leq 1 + C A^{8} (\log n)^{2d+8} \pi_{\rm \min}^{-2/d} \xi^{-\frac{2}{d}} \\
		&= 1 + C\cdot C_1^{-2/d} (t-1) \leq t\,,
	\end{align*}
	where we choose $C_1$ large enough so that the last inequality is satisfied. Then by Theorem \ref{T:HeatCheeger} for $t \geq 2$,
	\begin{align*}
		q_t(v,v)
		&\leq  (\xi + 1) \pi(v)
		\leq C_1 A^{4d} (\log n)^{d(d+4)} (t-1)^{-d/2} \frac{\pi_{\rm max}}{\pi_{\rm min}} + \pi(v) \\
		&\leq C A^{4d + 2} (\log n)^{d(d+4) +1} t^{-d/2} + \pi(v)\,.
	\end{align*}
	Using the above, we may bound
	\begin{align*}
		\sum_{t=0}^{\tmix(G', w')} (t+1) \sup_{v \in V(G')}q_t (v,v)
		&\leq C A^{4d + 2} (\log n)^{d(d+4) +1} \sum_{t=0}^\infty (t+1) t^{-d/2} + (\tmix+1)^2 \pi_{\rm \max} \\
		& \leq C A^{4d + 2} (\log n)^{d(d+4) +1} + C A^{20+8/d}(\log n)^{4d+4/d+19} n^{4/d - 1},
	\end{align*}
	where we used \eqref{eq:piminmax} and \eqref{eq:tmixG'w'}. Therefore, for $w\in \tilde B_4 \cap \tilde{\mathcal B}$, the escaping condition \eqref{eq:escaping} is 
	satisfied with $\theta= c_1 (\log n)^{c_2}$ for some $c_1 = c_1(A,d)$ depending on $d$ and $\mu$, and some $c_2 = c_2(d)$ that depends only on $d$.
\end{itemize}

We may thus apply Theorem \ref{T:Nach} to $(G', w')$ to obtain bounds on the diameter of the UST on $(G', w')$. Deducing bounds on the diameter of the UST on $(G, w)$ then follows exactly as in Section \ref{S:pf} for expander graphs, where we again possibly need to enlarge $c_1$ and $c_2$.
\end{proof}

\begin{remark}\label{R:universalbox}
In the above proof, the exponent of the $\log n$ correction term depends on $d$ but not on the edge weight distribution $\mu$. Furthermore, for the same reasons as in Remark \ref{R:universalexp}, $\gamma$ can be taken as a universal constant as $\alpha$ can be chosen as anything below $1/10 \leq 1/2 -2/d$ for all $d \geq 5$.
\end{remark}

\subsection{Proof of Lemma \ref{L:PhiLattice}} \label{S:bnprofile}
Similar to the proof of Proposition \ref{P:Cheeger} for expander graphs, we will show that the bottleneck profile of $(G', w')$ differs from the bottleneck profile of the largest percolation cluster $\mathcal C_1$ only by a polylogarithmic factor. To this end, we define the following analogues of
the events $B_1, B_2, B_3$ and $B_4$ in \eqref{eq:B1234} for finite boxes in $\Z^d$, where $c_0$ is given in Lemma \ref{L:IsoSuperCrit} below:
\begin{equation}\label{eq:tildeB1234}
\begin{aligned}
	\tilde{B}_1 &:= \{|V(\mathcal C_1)| \geq 3n/4\}, \\
	\tilde{B}_2 &:= \{ \forall S \subseteq \mathcal C_1 \;\text{with}\, (\log n)^{\frac{d^2}{d-1}} \leq |S| \leq \frac{|V(\mathcal C_1)|}{2}, |E_{\mathcal C_1}(S, S^c)| \geq c_0 |S|^{\frac{d-1}{d}} \}, \\
	\tilde{B}_3 &:= \{ \text{all connected components of } G \backslash V(\mathcal C_1) \text{ have size at most} \, (\log n)^{\frac{d}{d-1}}\}, \\
	\tilde{B}_4 & := \{ \text{the closed edges in $K$ form clusters of size at most} \, \log n\},
\end{aligned}
\end{equation}
where ${\mathcal C}_1 = {\mathcal C}_1(p)$ denotes the largest percolation cluster in $G$ for the percolation process coupled to $w$ such that $e$ is open if $w_e\in [1/A, A]$, and $E_{\mathcal C_1}(S, S^c)$ denotes the set of edges between $S\subset V(\mathcal C_1)$ and $S^c= V(\mathcal C_1)\backslash S$ in the graph $\mathcal C_1$.  We denote by $p = p(A) = \mu([1/A, A])$ the percolation parameter.

As noted before, by Lemma \ref{L:scluster}, for any $\eta>0$ we can choose $A$ large enough such that $\P(w\in \tilde B_4) \geq 1- C n^{-\eta}$.
The following lemma gives a similar bound for $\P(w\in \tilde B_1)$.

\begin{lemma} \label{L:SizeSuperCrit}
There exist $p^*_1=p^*_1(d) \in (0, 1)$ and constants $c, C> 0$ such that for all $p \in (p^*_1, 1]$ and $n\in \N$,
\begin{equation}\label{eq:largeC1}
	\P\Big( |V(\mathcal C_1(p))| < \frac{3n}{4}\Big) \leq C \e^{- c n^{1/d}}
\end{equation}
\end{lemma}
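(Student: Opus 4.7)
The proof is the analogue in the box setting of Lemma \ref{L:Giant}, adapted to the weaker isoperimetric inequality for $[-L,L]^d \cap \Z^d$, namely $|E(A, A^c)| \geq c_d \min(|A|, n - |A|)^{(d-1)/d}$ for every $A \subseteq V$. The strategy has three steps: (i) show that the bad event forces the existence of a large closed edge-cut between two macroscopic pieces of the box, (ii) invoke the isoperimetric inequality to lower-bound the size of this cut by $c n^{(d-1)/d}$, and (iii) use a Peierls-style contour count, rather than a crude subset union bound, to obtain a probability estimate with the correct $p \to 1$ dependence.

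For step (i), I mimic the first paragraph of the proof of Lemma \ref{L:Giant}. If $|\mathcal{C}_1(p)| < 3n/4$, then I can produce a set $A \subset V$ that is a disjoint union of complete open clusters with $|A| \in [n/4, n/2]$, so that $E_p(A, A^c) = \emptyset$. The construction is a short case analysis: if $|\mathcal{C}_1| \leq n/4$, add clusters greedily in decreasing order of size until the running total first exceeds $n/4$; if $n/4 < |\mathcal{C}_1| \leq n/2$, take $A = \mathcal{C}_1$; if $n/2 < |\mathcal{C}_1| < 3n/4$, take $A = V \setminus V(\mathcal{C}_1)$, which is itself a union of open clusters. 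Step (ii) is then an immediate application of the box isoperimetric inequality: on the event $\{|\mathcal{C}_1(p)| < 3n/4\}$ there is a set $F \subseteq E$ of closed edges with $|F| \geq c_d (n/4)^{(d-1)/d} \geq c'_d n^{(d-1)/d}$ that separates two pieces of the box of size $\geq n/4$ each.

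The key point is step (iii): a naive union bound over subsets $A$ gives $2^n$ possibilities and is useless, so instead I bound the number of separating edge-cuts. A cut of this type contains a minimal sub-cut which, via the standard duality between edge cuts in $\Z^d$ and $*$-connected surfaces in the shifted dual lattice, is a $*$-connected collection of plaquettes. A classical combinatorial bound (see, e.g., Grimmett's \textit{Percolation}, or the Peierls-type estimates used in Pisztora's renormalisation) gives that the number of $*$-connected surfaces of size $k$ containing a fixed plaquette is at most $C_d^k$ for a dimension-dependent constant. Since the cut must pass through at least one of the at most $dn$ edges of the box, a union bound yields
\begin{equation*}
\P\bigl(|\mathcal{C}_1(p)| < 3n/4\bigr) \;\leq\; dn \sum_{k \geq c'_d n^{(d-1)/d}} C_d^k (1-p)^k.
\end{equation*}
Choosing $p^*_1 < 1$ large enough that $C_d(1 - p^*_1) \leq 1/2$, the geometric sum is bounded by $2 dn \cdot 2^{-c'_d n^{(d-1)/d}}$, and hence by $C \e^{-c n^{(d-1)/d}} \leq C \e^{-c n^{1/d}}$, the last inequality simply because $d \geq 2$. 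Thus the stated bound $C \e^{-c n^{1/d}}$ actually leaves a great deal of slack and could be strengthened to $C \e^{-c n^{(d-1)/d}}$ if desired; the weaker form in the statement is presumably all that is needed in the applications in Sections \ref{S:bnprofile} and \ref{S:Zd}.

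The only non-routine ingredient is the contour counting invoked in step (iii). This is a standard but somewhat technical combinatorial fact about cuts in $\Z^d$; I would either cite it from the percolation literature, or prove it directly by observing that a minimal edge-cut between two vertices corresponds to a $*$-connected set of dual plaquettes and then applying the well-known $C_d^k$ bound on the number of $*$-connected animals of size $k$ through a fixed plaquette. Once this combinatorial ingredient is in place, the probability bound above is immediate and the proof concludes.
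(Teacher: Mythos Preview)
The paper does not give a self-contained argument here: its entire proof is the one-line citation ``This follows from Theorem~1.2 of \cite{DP96} by choosing $\eps=1/4$.'' Your proposal is thus an attempt to reprove that cited surface-order large-deviation estimate directly, and the overall shape (isoperimetry combined with a Peierls contour count) is indeed how such results are obtained; your remark that the true exponent is $n^{(d-1)/d}$ rather than $n^{1/d}$ is also correct.

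There is, however, a genuine gap in step~(iii). The set $A$ you construct in step~(i) is in general a \emph{disconnected} union of open clusters (certainly in your first and third cases), and for such $A$ the edge boundary $E(A,A^c)$ need not be $*$-connected, nor must it contain a single $*$-connected ``minimal sub-cut'' of size comparable to $|E(A,A^c)|$. Concretely: if $|\mathcal C_1|$ is just below $3n/4$ and $V\setminus V(\mathcal C_1)$ consists of many well-separated singletons, then every $*$-connected set of closed edges has size at most $2d$, yet the total closed boundary is of order $n$. Your union bound over $*$-connected plaquette surfaces of size $\geq c'_d n^{(d-1)/d}$ therefore does not cover this event. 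The duality fact you invoke (minimal edge cuts correspond to $*$-connected dual surfaces) requires \emph{both} sides of the cut to be connected in $G$; see \cite[Lemma~2.1]{DP96}, which the paper itself uses later in the proof of Lemma~\ref{L:OutsideComponent}, and compare the care taken in the proof of Lemma~\ref{L:IsoSuperCrit} to ensure both $S$ and its complement are connected before invoking a minimal-cutset count.

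Repairing this is precisely the content of \cite{DP96}: one performs a block renormalisation so that ``bad'' blocks (rather than individual closed edges) form a $*$-connected cutset between two macroscopic regions, and then runs the Peierls argument at the block level. Alternatively, for $p$ close to $1$ one can try to argue more directly, but one still needs to manufacture a set $S$ with both $S$ and $V\setminus S$ connected in $G$ and $|S|,|V\setminus S|\asymp n$; your case analysis does not deliver this. In short, your plan is on the right track, but the passage from ``large closed boundary'' to ``large $*$-connected closed surface'' is where the substance of the cited result lies.
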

\begin{proof}
This follows from Theorem 1.2 of \cite{DP96} by choosing $\eps = 1/4$.
\end{proof}

The next lemma gives the desired bound for $\P(w\in \tilde B_3)$.

\begin{lemma} \label{L:OutsideComponent}
For any $\eta>0$, there exists $p^*_2=p^*_2(d, \eta) \in (0, 1)$ and a constant $C > 0$, such that for all $p \in (p^*_2, 1]$ and $n\in\N$
\begin{equation}\label{eq:smallcomponents}
	\P\big( \exists \text{ a connected component } H\subseteq  G\backslash V(\mathcal C_1(p)) \text{ with } |H| \geq (\log n)^{\frac{d}{d-1}}\big) \leq  \frac{C}{n^{\eta}}.
\end{equation}
	\end{lemma}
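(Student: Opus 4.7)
The plan is to mirror the argument of Lemma~\ref{L:RMC1} for expanders, with the key adjustment that the isoperimetric inequality in $[-L,L]^d\cap\Z^d$ yields only $|E(S, V\setminus S)|\geq c_0 |S|^{(d-1)/d}$ for $|S|\leq n/2$, rather than a bound linear in $|S|$. This is precisely why the threshold on component sizes is raised from $\log n$ to $(\log n)^{d/(d-1)}$: so that the boundary still contains at least $c_0\log n$ edges.

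First, I would condition on the event $\tilde B_1 = \{|V(\mathcal C_1)|\geq 3n/4\}$, whose complement has probability at most $C\e^{-cn^{1/d}}$ by Lemma~\ref{L:SizeSuperCrit}, provided $p$ is close enough to $1$. On $\tilde B_1$, any connected component $H$ of $G\setminus V(\mathcal C_1)$ with $|V(H)|\geq r_0:=(\log n)^{d/(d-1)}$ yields a set $S:=V(H)$ satisfying $|S|\leq n/4$, and every edge in $E(S, V\setminus S)$ must be closed: indeed, by maximality of $H$ the external vertex boundary of $S$ lies entirely in $V(\mathcal C_1)$, and an open edge $\{u,v\}$ with $u\in S$ and $v\in V(\mathcal C_1)$ would place $u$ in the same open cluster as $v$, i.e.\ $u\in V(\mathcal C_1)$, contradicting $u\in S$. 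The isoperimetric inequality for the box then gives $|E(S, V\setminus S)|\geq c_0\log n$.

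The naive union bound over connected $S\ni v_0$ with $|S|=r$ (at most $(2d\,\e)^r$ such sets by lattice animal counting) against the probability $(1-p)^{c_0 r^{(d-1)/d}}$ that the boundary is closed fails, since the linear exponent in $r$ dominates $r^{(d-1)/d}$ for large $r$. I would instead use a Peierls-type estimate: the number of connected subsets $S\ni v_0$ in $\Z^d$ with $|E(S, V\setminus S)|=m$ is at most $\lambda(d)^m$, because $E(S, V\setminus S)$ corresponds to a $\ast$-connected family of dual $(d-1)$-plaquettes (see e.g.\ Tim\'ar's boundary-connectedness lemma, or the contour counting in Grimmett's percolation book), which live in a graph of bounded degree and thus admit at most exponentially many realisations of prescribed size. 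Combining this with a union bound over $v_0\in V$,
\[
\P\big(\exists\, H \text{ with } |V(H)|\geq r_0\big) \;\leq\; \P(\tilde B_1^c) \;+\; n\sum_{m\geq c_0\log n}\big(\lambda(d)(1-p)\big)^m,
\]
which is at most $Cn^{-\eta}$ once $p$ is chosen close enough to $1$ so that $\lambda(d)(1-p)\leq \e^{-(\eta+1)/c_0}$.

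The main obstacle is the Peierls bound itself: in Lemma~\ref{L:RMC1} the expander property made lattice-animal counts sufficient because the boundary grew linearly in $|S|$, whereas here one genuinely needs to enumerate connected sets by their edge-boundary size. Some care is required in $d\geq 3$ since $E(S, V\setminus S)$ may split into several $\ast$-connected components when $S$ has higher-dimensional holes, but each such component still inherits the bounded-degree structure of the dual lattice and their total size is $m$, preserving the exponential bound $\lambda(d)^m$.
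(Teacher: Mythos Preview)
Your approach is essentially the same as the paper's: restrict to $\tilde B_1$ via Lemma~\ref{L:SizeSuperCrit}, invoke the box isoperimetric inequality to get $|\partial_E S|\geq |S|^{(d-1)/d}\geq \log n$, and then run a Peierls count on $\ast$-connected edge boundaries. The paper makes the last step precise by building an auxiliary graph $\mathcal E^*$ on edge midpoints (two edges adjacent if their midpoints are within $\ell_\infty$-distance~$1$), citing \cite[Lemma~2.1]{DP96} for $\ast$-connectedness of $\partial_E S$, and bounding the number of such sets of size $k$ by $\frac{dn}{k}(c_d\e)^k$ via the usual lattice-animal count in $\mathcal E^*$.

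Your worry about $\partial_E S$ breaking into several $\ast$-connected pieces is unfounded here, and your patch for it is not quite sound. Since $H$ is a \emph{maximal} connected component of $G\setminus V(\mathcal C_1)$, the external vertex boundary of $S=V(H)$ lies entirely in $V(\mathcal C_1)$; hence every other component of $G\setminus V(\mathcal C_1)$ is joined to the connected set $\mathcal C_1$ by a path avoiding $S$, so $S^c$ is connected in $G$ and $\partial_E S$ is a single $\ast$-connected set. Your fallback claim that several unanchored boundary components still cost only $\lambda(d)^m$ is not correct in general---each additional component must be placed somewhere, incurring an extra polynomial factor in $n$---but fortunately this case never arises.
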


\begin{proof}
We follow the proof of \cite[Lemma 3.2]{PR12}, which treats a similar event on the torus $\mathbb T^d$.
By Lemma \ref{L:SizeSuperCrit}, we may first restrict to the event $\{|\mathcal C_1(p)| \geq \frac{3n}{4}\}$. We will then bound the probability in \eqref{eq:smallcomponents} by a union bound over all vertex sets $S\subset V$ in the box $G=(V, E)$ that can arise as a connected component of $G\backslash V(\mathcal C_1)$ with $(\log n)^{\frac{d}{d-1}}\leq |S|\leq \frac{n}{4}$. Equivalently, we can take the union bound over all realisations of the edge boundary set $\partial_E S\subset E$ (those edges with exactly one endpoint in $S$), where each edge in $\partial_E S$ must be closed in the percolation configuration for $S$ to be a connected component of $G\backslash V(\mathcal C_1)$. By the edge isoperimetric inequality for finite boxes in \cite[Theorem 3]{BL91}, for $|S|$ with $|S|\leq n/2$, we have
\begin{equation}\label{eq:eiso}
	|\partial_E S| \geq \min\{ |S|^{1-\frac{1}{r}} r n^{\frac{1}{r}-\frac{1}{d}}: r=1, 2, \ldots, d\}   \geq |S|^{\frac{d-1}{d}}.
\end{equation}
Since we will only consider $S$ with  $|S|\geq (\log n)^{\frac{d}{d-1}}$, this imposes the constraint that
\begin{equation}\label{eq:eiso2}
	|\partial_E S| \geq  \log n.
\end{equation}

We define a new graph $\mathcal{E}^*$ as follows. The vertices of $\mathcal{E}^*$ correspond to the edges of the box $[ -L, L]^d \cap \Z^d$, and for $e \in V(\mathcal{E}^*)$ we denote by $m_e$ the midpoint of $e$ viewed as a vector in $\R^d$. Two vertices $e,f$ of $\mathcal{E^*}$ are connected if their midpoints, $m_e$ and $m_f$, are of $\ell_\infty$ distance at most $1$. See also the construction in \cite[Section 2]{DP96}. One may verify that $\mathcal{E}^*$ has at most $d n$ many vertices and that the maximum degree is bounded by some constant $c_d$ depending only on $d$. We say that an edge set $F \subset E(G)$ is $\ast$-connected if the corresponding vertices in $\mathcal{E}^*$ are connected.

For $S\subset V$ to arise as a connected component of $G\backslash V(\mathcal C_1)$, we note that $\partial_E S$ must be a $\ast$-connected set of edges (cf.\ \cite[Lemma 2.1]{DP96}). As cited in the proof of Lemma \ref{L:scluster}, the number of $\ast$-connected $\partial_E S\subset E$ with $|\partial_E S|=k$ is then bounded by $\frac{d n}{k} (c_d \e)^k$. We can now first apply Lemma \ref{L:SizeSuperCrit} and then take a union bound over all $\ast$-connected edge sets $\partial_E S \subset E$ with  $|\partial_E S| \geq \log n$ to bound the probability in \eqref{eq:smallcomponents} by
\begin{equation*}
	C \e^{- c n^{1/d}} +  \sum_{k=\log n}^{d n} \frac{d n}{k} (c_d \e)^k   (1-p)^k \leq \frac{C}{n^\eta},
\end{equation*}
where the last bound holds uniformly in $p> p_2^*$ if $p_2^*$ is sufficiently close to $1$.
\end{proof}

Lastly, the following lemma gives the desired bound for $\P(w\in \tilde B_2)$.

\begin{lemma} \label{L:IsoSuperCrit}
There exists $c_0\in (0,1)$ such that for any $\eta>0$, there exist $p_3^* = p_3^*(d, \eta) \in (0, 1)$ and $C>0$, we have for all $p \in (p_3^*, 1]$ and $n\in \N$
\begin{equation}\label{eq:tB2bd}
	\P \Big( \exists S \subseteq V(\mathcal C_1)\,:
	(\log n)^{\frac{d^2}{d-1}} \leq |S|
	\leq \frac{|V(\mathcal C_1)|}{2},
	| E_{\mathcal C_1}(S,S^c)| \leq c_0 |S|^{\frac{d-1}{d}} \Big) \leq \frac{C}{n^{\eta}}.
\end{equation}
\end{lemma}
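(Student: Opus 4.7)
The strategy is a Peierls-type argument. I first apply Lemma~\ref{L:SizeSuperCrit} to restrict to the likely event $\tilde B_1 = \{|V(\mathcal C_1)| \geq 3n/4\}$, whose complement has probability $Ce^{-cn^{1/d}} \ll n^{-\eta}$. For any $S \subseteq V(\mathcal C_1)$ with $|S| \leq |V(\mathcal C_1)|/2 \leq n/2$, the edge isoperimetric inequality \eqref{eq:eiso} gives $|\partial_E S| \geq |S|^{(d-1)/d}$, where $\partial_E S$ denotes the edge boundary of $S$ in the full box $G$. Crucially, since $\mathcal C_1$ is a connected component of the open subgraph, any edge with exactly one endpoint in $V(\mathcal C_1)$ is closed; hence every edge of $\partial_E S$ is either closed or lies in $E_{\mathcal C_1}(S, S^c)$, giving the identity
\[|E_{\mathcal C_1}(S, S^c)| = |\partial_E S| - |\{e \in \partial_E S : e \text{ closed}\}|.\]
A violation of the sought lower bound therefore forces at least a $(1-c_0)$-fraction of $\partial_E S$ to be closed.

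The core probabilistic step will be a Peierls bound for \emph{connected} sets: for $p$ close enough to $1$ and $K_0 = K_0(\eta,d,p)$ sufficiently large, with probability $\geq 1-Cn^{-\eta}$, every connected $S' \subseteq V$ with $|\partial_E S'| \geq K_0 \log n$ has at most $(1-c_0)|\partial_E S'|$ closed boundary edges, for a fixed $c_0 \in (0,1)$. The count exploits that $\partial_E S'$ is $\ast$-connected for connected $S'$ in the box: the number of $\ast$-connected edge sets of size $m$ containing a fixed edge is at most $(c_d e)^m$, and a cutset together with an anchor vertex in $S'$ determines $S'$, so the number of connected $S' \subseteq V$ with $|\partial_E S'| = m$ is at most $n(C_d')^m$. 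The binomial tail $\mathbb P(\geq (1-c_0)m \text{ edges closed}) \leq (e(1-p)/(1-c_0))^{(1-c_0)m} =: \delta_p^m$ can be made arbitrarily small with $p$, and summing the geometric series $n\sum_{m\geq K_0\log n}(C_d'\delta_p)^m \leq n(C_d'\delta_p)^{K_0\log n}$ yields the auxiliary claim for $K_0$ large enough.

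For general $S$, I decompose $S = \bigsqcup_i S_i$ into connected components in $G$. Distinct components share no edge of $G$, so $|E_{\mathcal C_1}(S,S^c)| = \sum_i |E_{\mathcal C_1}(S_i, V(\mathcal C_1)\setminus S_i)|$. I then split components into \emph{small} ($|S_i| < L_0 := (K_0 \log n)^{d/(d-1)}$) and \emph{large} ($|S_i| \geq L_0$). Each large $S_i$ has $|\partial_E S_i| \geq |S_i|^{(d-1)/d} \geq K_0 \log n$, so the Peierls step yields $|E_{\mathcal C_1}(S_i,\cdot)| \geq c_0 |S_i|^{(d-1)/d}$; each non-empty small $S_i$ satisfies $|E_{\mathcal C_1}(S_i,\cdot)| \geq 1$ because $\mathcal C_1$ is connected and $V(\mathcal C_1)\setminus S_i$ is non-empty. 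If the large components account for at least $|S|/2$, the subadditivity of $x \mapsto x^{(d-1)/d}$ gives $|E_{\mathcal C_1}(S,S^c)| \gtrsim |S|^{(d-1)/d}$; if the small components do, the number of small components is $\geq |S|/(2L_0)$, giving $|E_{\mathcal C_1}(S,S^c)| \geq |S|/(2L_0) \geq c_0 |S|^{(d-1)/d}$ precisely when $|S| \geq (\text{const}) L_0^d = (\text{const})(\log n)^{d^2/(d-1)}$, matching the threshold in the lemma. The main obstacle is the Peierls step of the second paragraph: it is essential to parametrize by the boundary size $m$ via the $\ast$-connected cutset enumeration rather than by the vertex count $|S'|$ via the Kesten-type lattice animal bound $n(c_d e)^{|S'|}$, since the latter would not be compensated by the probability factor $\delta_p^{|S'|^{(d-1)/d}}$ for large $|S'|$ when $d \geq 2$.
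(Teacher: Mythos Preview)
Your overall architecture—the identity $|E_{\mathcal C_1}(S,S^c)|=|\partial_E S|-\#\{\text{closed edges in }\partial_E S\}$, a Peierls bound for connected pieces, and then the large/small component decomposition—is natural and more elementary in spirit than the paper's route. However, the Peierls step has a genuine gap: the assertion that $\partial_E S'$ is $\ast$-connected whenever $S'\subseteq V$ is connected is false. A connected annulus $S'=\{x\in V:2\le\|x\|_\infty\le 4\}$ inside a large box has $V\setminus S'$ disconnected, and $\partial_E S'$ splits into an inner and an outer $\ast$-component whose edge midpoints are at $\ell_\infty$-distance at least $3$. Since your enumeration ``number of connected $S'$ with $|\partial_E S'|=m$ is at most $n(C_d')^m$'' rests precisely on this $\ast$-connectedness, it is not justified; and as your last paragraph correctly notes, the fallback of enumerating lattice animals by $|S'|$ cannot compensate the probability factor $\delta_p^{|S'|^{(d-1)/d}}$.

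The paper confronts exactly this obstruction by first proving the isoperimetric bound only for $S$ such that both $S$ and $V(\mathcal C_1)\setminus S$ are connected in $\mathcal C_1$: in that doubly-connected case $E_G(S,A_r)$ is a \emph{minimal cutset} between two fixed vertices, and the minimal-cutset enumeration of \cite[Lemma~2.9]{BM03} replaces the $\ast$-connectedness count. The extension to arbitrary $S$ is then achieved not by a component decomposition but by the structural result \cite[Lemma~2.5]{BM03} that the infimum of the isoperimetric ratio over sets of size $\le x|V(\mathcal C_1)|$ is realised, up to a dimensional constant, by a single connected set (and for $x=1/2$ by a doubly-connected one). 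Your component decomposition would become a valid alternative if you controlled only the \emph{outer} boundary of each $S_i$ (which is $\ast$-connected once $V\setminus S_i$ has a distinguished connected piece) and showed it alone has size $\gtrsim|S_i|^{(d-1)/d}$; but that requires bounding the filled-in set $\hat S_i$ by $n/2$, which is itself a nontrivial step not supplied here.
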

\begin{proof}[Proof sketch.] We can follow the arguments in \cite{BM03}. First we can prove a variant of \eqref{eq:tB2bd} where $S$ is further required
to satisfy the condition that both $S$ and $S^c := V(\mathcal C_1) \backslash S$ are connected in $\mathcal C_1$, that is, for $p$ close enough to $1$,
\begin{align}
	\P \Big( \exists \text{connected }S \subseteq V(\mathcal C_1)\, & :
	V(\mathcal C_1) \backslash S  \text{ is connected in } \mathcal C_1, \label{eq:B2DoubleConnected} \\
	& (\log n)^{\frac{d}{d-1}} \leq |S|
	\leq \frac{|V(\mathcal C_1)|}{2},
	| E_{\mathcal C_1}(S,S^c)| \leq \frac{1}{2} |S|^{\frac{d-1}{d}} \Big) \leq \frac{C}{n^{\eta}}. \notag
\end{align}
This is essentially \cite[Theorem 2.4]{BM03} (or \cite[Corollary 1.4]{Pet08}) with a quantitative probability bound. Following the proof in \cite[Section 2.4]{BM03}, the basic observation is that for any subset $S$ of the box $G=(V, E)$ with $|S| \geq (\log n)^{\frac{d}{d-1}}$, if $S$ turns out to be a connected subset of $\mathcal C_1$ such that $S^c$ is also connected in $\mathcal C_1$, then $S^c$ must lie in one of the connected components of $V\backslash S$ in $G$ (denoted by $A_r$ in \cite[Section 2.4]{BM03} with $A_r \supseteq \mathcal{C}_1 \backslash S$ and $\partial_E S = E_G(S, A_r)$). By the assumption $|S|\leq \frac{1}{2} |V(\mathcal C_1)|$, we have $|S|\leq |A_r|$, and hence by the isoperimetric inequality \eqref{eq:eiso}, the number of edges between $S$ and $A_r$ in $G$ is bounded by
$$
|E_G(S, A_r)| \geq |S|^{\frac{d-1}{d}} \geq \log n.
$$
Since we are assuming $S\subseteq V(\mathcal C_1)$, for the event $\{| E_{\mathcal C_1}(S,S^c)| \leq \frac{1}{2} |S|^{\frac{d-1}{d}}\}$ to occur, at least half of the edges in $E_G(S, A_r)$ must be closed in the percolation configuration, the probability of which can be bounded by $\exp (-c |E_G(S, A_r)|)$ for an arbitrarily large $c$ if $p$ is chosen close enough to $1$. 

Fix two vertices $u \in S$ and $v \in A_r$, then as $S$ and $A_r$ are connected, $E_G(S, A_r)$ is a minimal cutset separting $u$ from $v$. That is, any path connecting $u$ and $v$ uses at least one edge in $E_G(S, A_r)$ and removing any edge from $E_G(S,A_r)$ breaks this property. By \cite[Lemma 2.9]{BM03} there is some universal $c'=c'(d)$ such that there are at most $\exp( c' |E_G(S,A_r)|$) many minimal cutsets separating $u$ from $v$. A union bound over all $u,v \in V$ and all possible choices of $E_G(S, A_r)$ with $|E_G(S, A_r)| \geq \log n$ then gives the desired probability bound of $C/n^\eta$ if $p$ is chosen close enough to $1$. For the remainder of the proof, we assume $p$ is so large such that \eqref{eq:B2DoubleConnected} holds.

To extend the bound \eqref{eq:B2DoubleConnected} to every $S$, following the proof
of Lemma 2.6 in \cite{BM03}, the key observation (cf.\ \cite[Lemma 2.5]{BM03} and \cite[Lemma 4.36]{AF14}) is that there exists a constant $c_d$ depending only on $d$ such that for any $x\in (0, 1/2]$, there exists a connected set $A\subset V(\mathcal C_1)$ with $|A|\leq x |V(\mathcal C_1)|$
such that
\begin{equation} \label{eq:B2SingleConnected}
	\tilde \varphi(x):= \inf_{S\subset V(\mathcal C_1), |S|\leq x |V(\mathcal C_1)|} \frac{|E_{\mathcal C_1}(S,S^c)|}{|S|} \geq c_d \frac{|E_{\mathcal C_1}(A,A^c)|}{|A|} =: c_d \tilde\varphi_A,
\end{equation}
i.e.\ the infimum (up to constants) of $\tilde{\varphi(x)}$ is obtained by connected sets. Note that our definition of $\tilde \varphi$ differs slightly from the definition of $\varphi$ in \cite{BM03}, although they are within constant multiples of each other, which explains the inequality and the constant $c_d$ in \eqref{eq:B2SingleConnected}.
Furthermore, if $x=1/2$, we can choose $A$ in \eqref{eq:B2SingleConnected} such that $A$ and $V(\mathcal C_1) \backslash A$ are both connected in $\mathcal C_1$. This last fact together with \eqref{eq:B2DoubleConnected} (assuming $|V(\mathcal C_1)|\geq 3n/4$, which we may thanks to Lemma \ref{L:SizeSuperCrit}),
and the crude bound $\tilde \varphi_A \geq \frac{1}{|A|}$ for $|A|\leq (\log n)^{\frac{d}{d-1}}$ imply that
$$
\P(\tilde \varphi(1/2) \geq c n^{-\frac{1}{d}} ) \geq 1-\frac{C}{n^\eta}
$$
for some $c>0$ depending on $d$. On the event $\{\tilde \varphi(1/2) \geq c n^{-1/d}\}$, we note that for all $S$ with $q |V(\mathcal C_1)| \leq |S| \leq \frac{1}{2}|V(\mathcal C_1)|$ for some fixed $0< q \leq \frac{1}{2}$ to be chosen later, we have
\begin{equation}\label{eq:c01}
	\frac{|E_{\mathcal C_1}(S,S^c)|}{|S|} \geq \frac{c}{n^{1/d}} \quad \Rightarrow \quad |E_{\mathcal C_1}(S,S^c)| \geq c' |S|^{\frac{d-1}{d}},
\end{equation}
where $c'$ depends on $d$ and $q$. This implies that \eqref{eq:tB2bd} holds with $c_0:=c'$
if we restrict to $S\subset V(\mathcal C_1)$ with $q |V(\mathcal C_1)| \leq |S|\leq |V(\mathcal C_1)|/2$.

It remains to show that \eqref{eq:tB2bd} still holds if we restrict to $S\subset V(\mathcal C_1)$ with $(\log n)^{\frac{d^2}{d-1}}\leq |S|\leq q |V(\mathcal C_1)|$. By the argument around \eqref{eq:B2SingleConnected}, we can find a connected $A\subset V(\mathcal C_1)$ with $|A|\leq |S|$ such that $\tilde \varphi_S \geq c_d \tilde \varphi_A$, which implies that 
\begin{equation}\label{eq:ECSA}
	\frac{|E_{\mathcal C_1}(S,S^c)|}{|S|^{\frac{d-1}{d}}} =\tilde \varphi_S |S|^{1/d} \geq c_d \tilde \varphi_A |A|^{1/d} = c_d \frac{|E_{\mathcal C_1}(A, A^c)|}{|A|^{\frac{d-1}{d}}}.
\end{equation}
We now consider the following two cases (to take into account case (1), which was not addressed in the proof of \cite[Lemma 2.6]{BM03}, we need to impose in \eqref{eq:tB2bd} the condition $|S|\geq (\log n)^{d^2/(d-1)}$ instead of $|S|\geq (\log n)^{\frac{d}{d-1}}$).
\begin{enumerate}
	\item If $|A| < (\log n)^{\frac{d}{d-1}}$, then the bounds $\tilde \varphi_S \geq c_d \tilde \varphi_A$ and $\tilde{\varphi_A} \geq \frac{1}{|A|}$ give
	\begin{equation}\label{eq:c02}
		\frac{|E_{\mathcal C_1}(S,S^c)|}{|S|^{\frac{d-1}{d}}} \geq  \frac{c_d}{|A|} \cdot |S|^{\frac{1}{d}} \geq  c_d.
	\end{equation}
	
	\item If $|A| \geq (\log n)^{\frac{d}{d-1}}$, then following the proof of Lemma 2.6 in \cite{BM03}, for $q\in (0,1/2)$ small enough, we can find another connected set $A'$ with $V(\mathcal C_1)\backslash A'$ also connected in $\mathcal C_1$ and $|A|\leq |A'|\leq |V(\mathcal C_1)|/2$, such that
	$$
	\frac{|E_{\mathcal C_1}(A, A^c)|}{|A|^{\frac{d-1}{d}}} \geq \frac{|E_{\mathcal C_1}(A', (A')^c)|}{|A'|^{\frac{d-1}{d}}} \geq \frac{1}{2},
	$$
	where by \eqref{eq:B2DoubleConnected}, the last inequality holds on an event with probability at least $1-C/n^\eta$. On the same event, by \eqref{eq:ECSA}, we have
	\begin{equation}\label{eq:c03}
		\frac{|E_{\mathcal C_1}(S,S^c)|}{|S|^{\frac{d-1}{d}}} \geq \frac{c_d}{2}.
	\end{equation}
\end{enumerate}
Combining \eqref{eq:c01}, \eqref{eq:c02} and \eqref{eq:c03} then gives \eqref{eq:tB2bd} with $c_0:=\min\{c', c_d/2\}$. 
\end{proof}

We are now ready to prove Lemma \ref{L:PhiLattice} along the same lines as in the proof of Lemma \ref{L:isop} and Proposition \ref{P:Cheeger} for expander graphs.

\begin{proof}[Proof of Lemma \ref{L:PhiLattice}]
Let $(\tilde B_i)_{1\leq i\leq 4}$ be defined as in \eqref{eq:tildeB1234}, and let $\tilde{\mathcal B}:=\cap_{i=1}^4 \tilde B_i$. By Lemmas \ref{L:scluster}, \ref{L:SizeSuperCrit} and \ref{L:OutsideComponent} and Lemma \ref{L:IsoSuperCrit}, for any $\eta > 0$, we can choose $A$ (and thus $p = p(A)$) large enough such that
\begin{equation*}
	\P(\tilde{\mathcal B}) \geq 1 - \frac{C}{n^\eta}
\end{equation*}
uniformly in $n$. We will assume the edge weights $w$ are in $\tilde{\mathcal{B}}$ from now on.

As in Section \ref{S:conditioning}, let $\cT(K)$ be any realisation of the uniform spanning tree $\cT=\cT_{(G, w)}$ restricted to the set of closed edges $K$, and we condition on this realisation. Recall from \eqref{eq:T'} and \eqref{eq:G''} that $G''$ is the graph obtained by removing edges in $K$ that are not in $\cT(K)$, while $G'$ is obtained by contracting edges in $G''$ that are in $\cT(K)$.

First, we will show that for any $S \subset V(G'')$ with $1 \leq |S| \leq n/2$, we have
\begin{equation} \label{eq:LatticeIsoMain}
	|E_{G''}(S,S^c)| \geq \frac{c}{(\log n)^{d+1}} |S|^{\frac{d-1}{d}},
\end{equation}
for some constant $c>0$ independent of $n$ and $\cT(K)$. As in the proof of Lemma \ref{L:isop}, let $S_1 = S \cap V(\mathcal C_1)$ and $S_2 = S \backslash S_1$, and let $L_1, \ldots, L_M$ denote the components of $G \backslash V(\mathcal C_1)$ that contain some vertex in $S_2$. Since $w\in \tilde{B}_3$, we have $|S_2| \leq M (\log n)^{d/(d-1)}$.

If $(\log n)^{\frac{d^2}{d-1}} \leq |S_1| \leq |S|\leq n/2 $, then the event $\tilde{B}_1 \cap \tilde{B}_2$ guarantees that
\begin{equation} \label{eq:latticeS1}
	|E_{\mathcal C_1}(S_1, S_1^c)|
	\geq c_0 \min\{ |S_1|^{\frac{d-1}{d}}, |V(\mathcal C_1)\backslash S_1|^{\frac{d-1}{d}}\} \geq \frac{c_0}{2} |S_1|^{\frac{d-1}{d}}\,,
\end{equation}
where we used the fact that $|V(\mathcal C_1)\backslash S_1|\geq |S_1|/2$ on the event $\tilde B_1=\{|V(\mathcal C_1)|\geq 3n/4\}$.

If $|S_1|< (\log n)^{\frac{d^2}{d-1}}$, we have the trivial bound
\begin{equation}\label{eq:EC1S2}
	|E_{\mathcal C_1}(S_1, S_1^c)| \geq 1 \geq \frac{1}{(\log n)^d} |S_1|^{\frac{d-1}{d}}.
\end{equation}
Note that compared with \eqref{eq:latticeS1}, the bound \eqref{eq:EC1S2} is uniform in $S_1=S\cap V(\mathcal C_1)$.

To bound $|E_{G''}(S,S^c)|$, we distinguish between two cases. If $|S_1| \geq \frac{M}{4d}$, then
\begin{equation*}
	\frac{|S_1|}{|S_2|} \geq \frac{\frac{M}{4d}}{M (\log n)^{\frac{d}{d-1}}} \geq \frac{1}{4d (\log n)^{\frac{d}{d-1}}} \quad \text{and hence} \quad \frac{|S_1|}{|S|} \geq \frac{1}{1+4d (\log n)^{\frac{d}{d-1}}}.
\end{equation*}
Since $E_{\mathcal C_1}(S_1, S_1^c) \subset E_{G''}(S,S^c)$, this bound with \eqref{eq:latticeS1} and \eqref{eq:EC1S2} imply \eqref{eq:LatticeIsoMain} for some $c>0$.

If instead $|S_1| < \frac{M}{4d}$, then as in the proof of Lemma \ref{L:isop}, we have $|E_{G''}(S,S^c)| \geq M/2$ and
\begin{equation*}
	\frac{|E_{G''}(S,S^c)|}{|S|^{\frac{d-1}{d}}} \geq \frac{M/2}{(|S_1| +|S_2|)^{\frac{d-1}{d}}} \geq \frac{M/2}{(\frac{M}{4d} + M (\log n)^{d/(d-1)})^{\frac{d-1}{d}}} \geq \frac{c}{\log n}.
\end{equation*}
Therefore, \eqref{eq:LatticeIsoMain} is still satisfied for some $c>0$.

\smallskip

As in the proof of Proposition \ref{P:Cheeger} for expander graphs, we now use \eqref{eq:LatticeIsoMain} to bound the bottleneck profile $\Phi_{(G', w')}(r)$ on the contracted graph $(G', w')$. Fix $r\in [\pi_{\rm min}, 1/2]$, and let $S \subset V$ be non-empty with $\pi(S) \leq r$. Note that $r/\pi_{\rm max}\leq |S| \leq r/\pi_{\rm min}$. Let $\tilde{S}$ be the pre-image of $S$ under the contraction from $G''$ to $G'$. Then by \eqref{eq:LatticeIsoMain}, we have
\begin{equation} \label{eq:LatticeExpansion}
	|E_{G'}(S, S^c)| = |E_{G''}(\tilde S, \tilde S^c)| \geq \frac{c}{(\log n)^{d+1}}   \min\{ |\tilde S|^{\frac{d-1}{d}}, |\tilde S^c|^{\frac{d-1}{d}} \}.
\end{equation}
Under the event $\tilde B_4$, each vertex in $S$ can be ``uncontracted'' to at most $\log n$ many vertices in $\tilde S$. Therefore, $|\tilde S|\leq |S| \log n $ and $G'$ has maximal degree at most $2d \log n$. Also, recall that the edge weights $w'(e)$ are in $[1/A, A]$. We then have
\begin{equation} \label{eq:SizeTildeS}
	\frac{|S^c|}{|S|} \geq \frac{(1-r) \pi_{\rm min}}{r \, \pi_{\rm max}} \qquad \text{and} \qquad  \frac{|\tilde S^c|}{|\tilde S|}  \geq \frac{1}{\log n}\cdot \frac{(1-r) \pi_{\rm min}}{r \pi_{\rm max}} \geq \frac{1}{2d A^2 (\log n)^2}.
\end{equation}
Inserting \eqref{eq:SizeTildeS} into \eqref{eq:LatticeExpansion} gives
\begin{equation*}
	|E_{G'}(S, S^c)| \geq \frac{c}{2d A^2 (\log n)^{d+3}}  |S|^{\frac{d-1}{d}}.
\end{equation*}
By the definition of $\Phi_{(G', w')}(S)$ in \eqref{eq:PhiGS}, the fact that $w'(e)\in [1/A, A]$ and $G'$ has maximal degree at most $2d \log n$, we obtain
\begin{equation*}
	\Phi_{(G', w')}(S) \geq \frac{|E_{G'}(S,S^c)| \cdot \frac{1}{A}}{2 |S| \cdot 2d A \log n}
	\geq \frac{c}{ 8 d^2 A^4 (\log n)^{d+4}} |S|^{-\frac{1}{d}}
	\geq \frac{c}{ 8 d^2 A^4 (\log n)^{d+4}} \Big(\frac{\pi_{\rm min}}{r}\Big)^{\frac{1}{d}},
\end{equation*}
which concludes the proof of Lemma \ref{L:PhiLattice}.
\end{proof}

\section{Other Graphs and Limitations} \label{S:GeneralCounter}

As noted in Remark \ref{R:seq}, for a sequence of $b$-expanders $(G_n, w_n)$ with $n$ vertices, maximal degree uniformly bounded by some $\Delta<\infty$, and i.i.d.\ edge weights $(w_n(e))_{e\in E_n}$ with common distribution $\mu$ such that $\mu(0,\infty)=1$, Theorem \ref{T:main} implies that with high probability as $n\to\infty$, the diameter of the uniform spanning tree $\cT_{(G_n, w_n)}$ is of order $\sqrt{n}$ up to polylogarithmic factors. In this section, we give an example showing that this conclusion is false if we drop the assumption of uniformly bounded degree. Additionally, we will discuss
possible extensions to general graphs, provided they ``expand'' well enough and have bounded degree.

\subsection{Unbounded Degree Counter Example}\label{S:counterex}

Let $K_n$ be the complete graph with $n$ vertices. Clearly, the isoperimetric constant $h_{K_n}$ as defined in \eqref{eq:hG} is of order $n$, so for any $b > 0$, $K_n$ is a $b$-expander for $n$ large enough. However, the maximum degree is $\Delta = n-1$, so the condition of bounded degrees in Theorem \ref{T:main}~(i) do not hold uniformly for $(K_n)_{n\in\N}$. Indeed, the conclusion in Remark \ref{R:seq} fails for certain choices of the edge weight distribution $\mu$, as shown in the following result.

\begin{proposition}
Let $K_n$ be the complete graph with $n$ vertices and assign each edge $e$ the weight $w_n(e) = \exp(\exp(U_e^{-1}))$, where $(U_e)_{e \in E(K_n)}$ are i.i.d.\ and uniformly distributed in $[0,1]$. Let $M^{(n)}$ be the (a.s.\ unique) spanning tree $T$ on $K_n$ that maximises $w_n(T):=\prod_{e \in T} w_n(e)$. Then
\begin{equation} \label{eq:convergeT1}
	\widehat \P( \cT = M^{(n)}) \xrightarrow{n \rightarrow \infty} 1.
\end{equation}
As a consequence, with high probability, the diameter of the uniform spanning tree $\cT_{(K_n, w_n)}$ is of order $n^{1/3}$ as $n\to\infty$.
\end{proposition}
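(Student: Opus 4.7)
The plan is to show that the UST $\cT$ coincides with the maximum-weight spanning tree $M^{(n)}$ with high probability, from which \eqref{eq:convergeT1} and the diameter claim both follow; the latter reduces to the classical result that the minimum spanning tree of $K_n$ with i.i.d.\ continuous edge weights has diameter of order $n^{1/3}$ w.h.p.\ (cf.\ e.g.\ the work of Addario-Berry, Broutin and Reed, and the scaling-limit theorem of Addario-Berry, Broutin, Goldschmidt and Miermont). Since the map $u\mapsto \exp(\exp(1/u))$ is strictly decreasing on $(0,1]$, the tree $M^{(n)}$ coincides almost surely with the minimum spanning tree of $K_n$ for the uniform weights $(U_e)_{e\in E(K_n)}$. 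Setting $W(T):=\sum_{e\in T}\exp(U_e^{-1})$ so that $w_n(T)=e^{W(T)}$, Cayley's formula yields
\[
\bP^w(\cT\neq M^{(n)}) \,\leq\, \sum_{T\neq M^{(n)}} \frac{w_n(T)}{w_n(M^{(n)})} \,\leq\, n^{n-2}\,\exp(-\delta), \qquad \delta := W(M^{(n)})-\max_{T\neq M^{(n)}} W(T).
\]

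By the symmetric exchange property of matroids, for any spanning tree $T$ there exists a bijection $\phi:M^{(n)}\setminus T\to T\setminus M^{(n)}$ such that $M^{(n)}-e+\phi(e)$ is a spanning tree for every $e$; MST-optimality then forces $U_e<U_{\phi(e)}$, so writing $W(M^{(n)})-W(T)=\sum_{e\in M^{(n)}\setminus T}(\exp(U_e^{-1})-\exp(U_{\phi(e)}^{-1}))$ and bounding each term from below gives $\delta\geq \delta_{\mathrm{swap}}$, where
\[
\delta_{\mathrm{swap}} := \min\bigl\{ \exp(U_{e^*}^{-1})-\exp(U_{e'}^{-1}) :\, e^*\in M^{(n)},\ e'\in \mathrm{cut}(e^*)\setminus\{e^*\} \bigr\}
\]
and $\mathrm{cut}(e^*)$ denotes the edge-cut of $K_n$ induced by removing $e^*$ from $M^{(n)}$. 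To lower-bound $\delta_{\mathrm{swap}}$ I would invoke two high-probability events: (a) the classical bound $\max_{e\in M^{(n)}} U_e\leq C\log n/n$ (essentially the connectivity threshold for $G(n,p)$), and (b) a union bound over the at most $\binom{\binom{n}{2}}{2}\leq n^4$ edge pairs yielding $\min_{e\neq f}|U_e-U_f|\geq n^{-5}$. On these events, every swap pair $(e^*,e')$ satisfies $U_{e^*}<U_{e'}\leq 1$ by MST-optimality. A case split on whether $U_{e'}\geq 2U_{e^*}$ (in which case $\exp(U_{e^*}^{-1})-\exp(U_{e'}^{-1})\geq \tfrac12\exp(U_{e^*}^{-1})\geq \tfrac12\exp(n/(C\log n))$) or $U_{e^*}<U_{e'}<2U_{e^*}$ (in which case $U_{e^*}^{-1}-U_{e'}^{-1}\geq n^{-5}/(2(C\log n/n)^2)$, and the mean-value bound $e^x-e^y\geq e^y(x-y)$ for $x>y$ gives $\exp(U_{e^*}^{-1})-\exp(U_{e'}^{-1})\geq \exp(n/(2C\log n))/(2C^2n^3\log^2 n)$) shows in either case that $\delta\geq\delta_{\mathrm{swap}}\geq \exp(cn/\log n)$ for some $c>0$, which is much larger than $(n-2)\log n$.

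The bound $\bP^w(\cT\neq M^{(n)})\leq n^{n-2}\exp(-\delta)\to 0$ thus holds on a $\widehat\P$-typical event, giving $\widehat\P(\cT=M^{(n)})\to 1$. On $\{\cT=M^{(n)}\}$ we have $\mathrm{diam}(\cT)=\mathrm{diam}(M^{(n)})$, which is of order $n^{1/3}$ w.h.p.\ by the classical MST result, yielding the claim. The main obstacle is bounding $\delta_{\mathrm{swap}}$ in the delicate regime where $U_{e^*}$ and $U_{e'}$ are very close; the crucial mechanism is that the doubly-exponential scaling of $w_n$ amplifies a merely polynomially small gap in the $U$-values into an exponentially large gap in the $W$-values, so that the $n^{n-2}$ alternative spanning trees contribute a negligible total weight compared to $M^{(n)}$.
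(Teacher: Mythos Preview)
Your proof is correct and follows essentially the same approach as the paper: both reduce to comparing $M^{(n)}$ against a single-swap tree, invoke Cayley's formula, the $G(n,p)$ connectivity threshold to bound $\max_{e\in M^{(n)}}U_e$, and a minimum-gap estimate for the uniforms, then exploit the doubly-exponential weights to show $n^{n-2}w_n(T_2)/w_n(T_1)\to 0$. The only cosmetic differences are that the paper reaches the single-swap reduction via the elementary observation that the second-best tree $T_2$ differs from $T_1$ by exactly one edge (rather than your matroid bijective-exchange argument), and handles the exponential comparison with the single bound $U_eU_f\le 1$ and $1-e^{-x}\ge x/2$ in place of your case split on $U_{e'}\gtrless 2U_{e^*}$.
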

\begin{proof}
Let $T_1$ and $T_2$ be the spanning trees in $K_n$ with the largest and second largest weight $w_n(T)=\prod_{e \in T} w_n(e)$, respectively.
Note that $T_1 = M^{(n)}$, the minimum spanning tree on $K_n$ with edge variables $(U_e)_{e\in E(K_n)}$, see e.g.~\cite{ABGM17}. We have the following facts:
\begin{enumerate}[label={\arabic*)}]
	\item The number of spanning trees on $K_n$ is $n^{n-2}$ (Caley's formula). \label{enu:Cayley}
	\item $T_1$ and $T_2$ differ by a single edge (otherwise we can swap an edge in $T_2$ with an edge in $T_1$ to obtain a spanning tree $T_3$ with $w_n(T_2) < w_n(T_3) < w_n(T_1)$). \label{enu:T1T2}
	\item If $(U_i)_{1\leq i \leq m}$ is a collection of i.i.d.\ uniform random variables on $[0,1]$ and $X = \min_{i \neq j} |U_i - U_j|$, then
	\begin{equation} \label{eq:uniform_gap_law}
		\P( X > t) = (1 - (m-1) t)^m \qquad \text{for } 0 \leq t \leq \frac{1}{m-1}.
	\end{equation}
        Namely, by exchangeability
        \begin{align*}
            \P( X > t) &= m! \, \P( U_1 < U_2 - t, U_2 < U_3 -t, \ldots, U_{m-1} < U_m -t) \\
            &= m! \int^1_{(m-1)t} \int^{u_m - t}_{(m-2)t} \ldots \int^{u_3 - t}_{t} \int^{u_2 -t}_{0} du_1 du_2 \ldots du_{m-1} du_{m}.
        \end{align*}
        Using induction one can verify that for $a \geq (m-1)t$
        \begin{equation*}
            \int^a_{(m-1)t} \int^{u_m - t}_{(m-2)t} \ldots \int^{u_3 - t}_{t} \int^{u_2 -t}_{0} du_1 du_2 \ldots du_{m-1} du_{m} = \frac{1}{m!}(a - (m-1)t)^m,
        \end{equation*}
        from which \eqref{eq:uniform_gap_law} follows. This implies that $\P( X \leq m^{-2} (\log m)^{-1}) \rightarrow 0$. \label{enu:minGapU}
 
	\item The tree $T_1$ may be constructed using Kruskal's algorithm (see e.g.\ the introduction of \cite{ABGM17}). Using a coupling between an Erd\H{o}s--R\'enyi random graph $G_{n,p}$ and the random variables $(U_e)_{e \in K_n}$, we have that the connected components of $G_{n,p}$ and $G_{n,p} \cap T_1$ coincide with each other. As the probability that $G_{n,p}$ is connected for $p_n=2 \log n/n$ tends to one as $n$ diverges (see e.g.\ Chapter 4 of \cite{FK16}),  with high probability every edge $e$ in $T_1$ has $U_e \leq 2 \log n/n$. \label{enu:ConnectionP} 
\end{enumerate}
In view of items \ref{enu:Cayley} and \ref{enu:T1T2} above and the law of the uniform spanning tree $\cT_{(K_n, w_n)}$ defined in \eqref{eq:UST},
to prove \eqref{eq:convergeT1}, it suffices to show that
\begin{equation*}
	\frac{n^{n-2} w_n(T_2)}{w_n(T_1)} \xrightarrow{n \rightarrow \infty} 0\,.
\end{equation*}
To this end, let $e \in T_1 \backslash T_2$ and $f \in T_2 \backslash T_1$ be the two edges in which $T_1$ and $T_2$ differ, and define the gap $g := U_f - U_e$. Then by item \ref{enu:minGapU} with $m=\binom{n}{2}$, with high probability
\begin{equation*}
	\frac{g}{U_e U_f} \geq n^{-4} \log^{-1} n.
\end{equation*}
Item \ref{enu:ConnectionP} gives that with high probability $\max_{e\in T_1} U_e\leq \frac{2\log n}{n}$, and hence
\begin{align*}
	\frac{n^{n-2} w_n(T_2)}{w_n(T_1)} &= n^{n-2} \exp \Big( \exp(U_f^{-1}) - \exp(U_e^{-1})\Big) \\
	&= n^{n-2} \exp \Bigg( -\exp(U_e^{-1}) \Big(1 - \exp \big(- \frac{g}{U_e U_f} \big) \Big)\Bigg) \\
	&\leq n^{n-2} \exp \Bigg( -\exp(U_e^{-1}) \Big(1 - \exp \big(- n^{-4} \log^{-1} n \big) \Big)\Bigg) \\
	&\leq n^{n-2} \exp \Big( -\exp\big(\frac{n}{2 \log n} \big ) \frac{1}{2n^4 \log n} \Big) \rightarrow 0 \, ,
\end{align*}
with high probability, where we used the bound $(1-\e^{-x}) \geq x/2$ for $0 < x \leq 1$.

It is known from \cite{ABGM17} that $n^{-1/3} M^{(n)}$ converges in distribution to a random compact metric space. Therefore, with high probability as $n\to\infty$, the diameter of $T_1$, and hence $\cT_{(K_n, w_n)}$, is of order $n^{1/3}$. 
\end{proof}

\begin{remark}
We believe that assigning i.i.d.\ weights $\exp(U_e^{-1/3})$ to each edge $e\in E(K_n)$ already results in a uniform spanning tree $\cT$ with diameter
of order $n^{1/3}$ for typical realisations of $(U_e)_{e\in E(K_n)}$, although the law of $\cT$ no longer concentrates on the minimum spanning tree $T_1$. On the other hand, for edge weights $\exp(U^{-\gamma}_e)$ with $0 < \gamma < 1/3$, we conjecture that the diameter of the UST is of order $n^\alpha$ for some $\alpha \in (1/3, 1/2)$ for typical realisations of $w_n$. 
\end{remark}

\subsection{Bounded Degree Graphs With Good Bottleneck Profile}

Assume that $(G_n)_{n \in \N}$ is a sequence of bounded degree graphs with good expansion properties, say for the unweighted graph $\Phi_{G_n}(r) \geq c (\pi_{\rm min}^{-1} r)^{-\frac{1}{\beta}}$, where $\beta > 4$ so that there exists an $\alpha>0$ with $2/\beta - \alpha \leq 1/2$, in line with the mixing condition \eqref{eq:mixing}. Furthermore, assume that for some $k \geq 0$ and percolation parameter $p$ arbitrarily close to $1$ (both independent of $n$), one can prove that
\begin{equation} \label{eq:ConjIso}
`` \;\Phi_{G_n}(r) \geq c (\pi_{\rm min}^{-1} r)^{-\frac{1}{\beta}} \;\Longrightarrow\; \Phi_{G_n'} (r) \geq c (\log n)^{-k} (\pi_{\rm min}^{-1} r)^{-\frac{1}{\beta}} \text{ with high probability}",
\end{equation}
where $G_n'$ is the graph obtained by conditioning the uniform spanning tree on the set of closed edges and contracting the resulting connected components, cf.~Section \ref{S:conditioning}. One may then obtain bounds on the mixing time and the transition probabilities of $G'_{n}$ as carried out in Section \ref{S:Zd}.
In other words, for bounded degree graphs, verifying the implication \eqref{eq:ConjIso} would imply that the diameter of the UST is of order $\sqrt{n}$ up to factors of $(\log n)^c$. 

However, it is not clear how the implication \eqref{eq:ConjIso} can be proven for an arbitrary sequence of bounded degree graphs with good expansion. 
In the proof of Theorem \ref{T:main}, we used additional knowledge about the structure of expander graphs and supercritical percolation clusters on $\Z^d$. But there is hope that \eqref{eq:ConjIso} can be proved for a larger family of graphs since in the coupling between the random edge weights and the bond percolation process, we can choose the percolation parameter $p$ arbitrarily close to $1$ and condition on very subcritical clusters.

\appendix 
\section{Proof Sketch for Theorem \ref{T:Nach}} \label{S:app}

We sketch here how the proof of \cite[Theorem 1.1]{MNS21} can be adapted to prove Theorem \ref{T:Nach}.

\begin{proof}[Proof sketch for Theorem \ref{T:Nach}]
	One difference between Theorem \ref{T:Nach} and \cite[Theorem 1.1]{MNS21} is that the latter only considers unweighted graphs. Furthermore, in the bounds for $\mathrm{diam}(\cT_{(G, w)})$ equivalent to \eqref{eq:Nach1}, the prefactors of $\sqrt{n}$ were only given in terms of a generic constant $C$ in \cite{MNS21}. Here we make $C$ depend explicitly on the other quantities and show that it can be taken of the form $(C'D \theta \epsilon^{-1})^{k}$. We now explain how the proof of \cite[Theorem 1.1]{MNS21} can be adapted to account for these differences.
	
	First note that the conditions \eqref{eq:balanced}, \eqref{eq:mixing} and \eqref{eq:escaping} are natural analogues of the conditions (bal), (mix), and (esc) in \cite[Theorem 1.1]{MNS21} for weighted graphs. Let $(G,w)$ be a weighted graph with $|V|=n$ that satisfies \eqref{eq:balanced}, \eqref{eq:mixing} and \eqref{eq:escaping} with constants $D, \alpha$, and $\theta$. We note that the stationary distribution $\pi$
	of the lazy random walk on $(G, w)$ still satisfies $\frac{1}{Dn}\leq \pi(v)\leq \frac{D}{n}$ for all $v\in V$, and the crucial bounds
	in (1) and (2) of \cite{MNS21} still hold. Following the notation in \cite{MNS21} (see (7) and Claim 2.5 therein), we denote
	\begin{equation*}
		r := n^{1/2 - \alpha/3},\;\; s: =n^{1/2 - 2\alpha/3},\;\; q := r/\sqrt{n} = n^{-\alpha/3}\,.
	\end{equation*}
	
	We first point out how the bounds in Section 2 of \cite{MNS21} can be quantified, which will give the lower bound on $\mathrm{diam}(\cT_{(G, w)})$ in Theorem \ref{T:Nach} for suitable choices of $k$. By tracking the precise constants in each instance of $\preceq$ and $\succeq$, it can be checked that the bounds in Claims 2.2, 2.3, 2.5 and 2.6 in \cite[Section 2]{MNS21} can be quantified as follows:
	\begin{center}
		\begin{tabular}{c|c}
			& \textbf{Precise Bounds} \\\hline
			Claim 2.2 & $\leq D(\theta + 2 D) /n $\\
			Claim 2.3 & $\geq \big(2D^2( \theta + 2D)\big)^{-1}$, assuming $6r^2D/n\leq 1/2$ \\
			Claim 2.5 & $\geq q^2/16D^9(\theta + 2D)^3 =: 2C_0 q^2$ \\
			Claim 2.6 & $\leq D^2q^4$
		\end{tabular}
	\end{center}
	where $C_0:= \big(32 D^9(\theta + 2D)^3\big)^{-1}$. The bounds above lead to more precise bounds in the proof of \cite[Claim 2.4]{MNS21} as follows:
	\begin{center}
		\begin{tabular}{c|c}
			Equation & \textbf{Precise Bounds} \\\hline
			(9) &  $\leq 2D \beta^2$ \\
			(11) & $\P(\sum_{i=1}^N {\rm Cap}_r(\cdot)\geq \frac{1}{2} C_0 \beta q) \geq 1- 2\e^{-\beta/2D^2 q^{1/2}}$, assuming $C_0^{-1}\ll q^{-1/4}$ \\
			(12) & $\P(\sum_{i\neq j} {\rm Close}_r(\cdot) \geq  \frac{1}{2} C_0 \beta q) \leq D^2 \beta^2 q^{1/2}$, assuming $(C_0\beta)^{-1}\ll q^{-1/2}$
		\end{tabular}
	\end{center}
	Claim 2.4 in \cite{MNS21} then becomes
	\begin{equation*}
		\P\big(\text{Cap}_r(LE(X) )\geq C_0 \beta q\big) \geq 1- f_1(n, \beta)
	\end{equation*}
	with
	\begin{equation*}
		f_1(n,\beta) := \underbrace{2D \beta^2}_{(9)} + \underbrace{\beta/qn^2}_{(10)} + \underbrace{2 \exp\{-\beta/2D^2q^{1/2}\}}_{(11)} + \underbrace{D^2\beta^2 q^{1/2}}_{(12)}.
	\end{equation*}
	This strengthened version of Claim 2.4 can then be applied in the proof of Claim 2.8, which now states
	\begin{equation*}
		\P\big( \text{Cap}_r(LE(X)) \geq  C_0 \beta q \quad \text{and} \quad |LE(X)| \leq \beta^{-3} \sqrt{n}\big)  \geq 1 - f_2(n, \beta),
	\end{equation*}
	with
	\begin{equation*}
		f_2(n,\beta) := \beta^3 +\underbrace{ 2 D s \beta^{-2} n^{-1/2} + 3 C_0 \beta^2}_{(*)} + \underbrace{2 C_0 \beta^2 + n^{-2} + D\beta^2+f_1(n,\beta)}_{(**)} + \beta\,,
	\end{equation*}
	where $(*)$ and $(**)$ come, respectively, from the two cases $F^\star$ and $F_\star$  in the proof of Claim 2.8 in \cite{MNS21}, and we need to assume
	$$
	C_0\beta q \geq \frac{Dq}{\sqrt{n}} \quad \Longleftrightarrow \quad 32 D^{10}(\theta + 2D)^3 \leq \beta \sqrt{n},
	$$
	which is used in the last equation display on page 277 of \cite{MNS21}.
	
	The strengthened version of Claim 2.8 can then be applied in the proof of Claim 2.9, which now states
	\begin{equation*}
		\P\Big( Y_{\tau_{LE(X)}} \neq \rho \ \ \text{and} \ \ \text{Cap}_r(LE(Y)) \geq C_0 \beta^4 q \ \ \text{and} \ \ |LE(Y)| \leq \frac{\sqrt{n}}{\beta^3}\Big) \geq 1- f_3(n, \beta),
	\end{equation*}
	with 
	\begin{equation*}
		f_3(n,\beta) := f_2(n,\beta) + \underbrace{3 \beta C_0^{-1}}_{\tau^Y_\rho < \tau^Y_W} + \underbrace{D \beta + \beta}_{(14)} +f_1(n,\beta^4) + D\beta + D\beta^8,
	\end{equation*}
	where the terms are collected in the order they appear in the proof of Claim 2.9 in \cite{MNS21}.
	
	If $D, \theta \leq n^{\gamma}$ for some $\gamma>0$ sufficiently small ($\gamma=\alpha/325$ will suffice), then for all $\eps\in (n^{-\gamma}, 1)$ and the choice $\beta:= \delta C_0\eps$ for some $\delta>0$ small and independent of $D, \theta$, $\eps$ and $n$, it can be seen that all the assumptions are satisfied and $f_1(n, \beta), f_2(n, \beta), f_3(n, \beta) \leq \eps/3$. The strengthened versions of Claims 2.8 and 2.9 can then be applied to deduce a quantitative version of \cite[Theorem 2.1]{MNS21} for weighted graphs, which now reads as: for all $\eps\in (n^{-\gamma}, 1)$, and with $\beta=\delta C_0\eps$,
	\be\label{eq:varphibd}
	\P( |\varphi| \leq 2 \beta^{-3} \sqrt{n} \text{ and Cap}_r(\varphi) \geq C_0 \beta^4 q) \geq 1 - \epsilon,
	\ee
	where $|\varphi|$ is the length of the loop erased random walk between two independently chosen vertices according to the stationary distribution $\pi$.
	As shown in \cite{MNS21}, the lower bound on ${\rm Cap}_r(\varphi)$ implies $|\varphi| \geq C_0\beta^4 \sqrt{n}/D$, from which the lower bound on $\mathrm{diam}(\cT_{(G, w)})$ in Theorem \ref{T:Nach} follows easily.
	
	We now point out how the bounds in Section 3 of \cite{MNS21} can be modified to give the upper bound on $\mathrm{diam}(\cT_{(G, w)})$ in Theorem \ref{T:Nach}.
	First note that the results on effective conductance in Section 3.1 of \cite{MNS21} still hold if we define the effective conductance between two
	disjoint set of vertices $W, S\subset V$ by
	\begin{equation*}
		\cC_{\rm eff}(W\leftrightarrow S) = 2 \Vert w\Vert  \sum_{u\in W} \pi(u) \bP_u(\tau_S <\tau_W^+),
	\end{equation*}
	where $\Vert w\Vert :=\sum_{e\in E} w_e$, $\bP_u$ is the law of the lazy random walk $X$ starting from $u$, $\tau_S$ is the first hitting time of $S$,
	and $\tau_W^+$ is the first return time to $W$. The degree of a vertex $deg(u)$ should be replaced by $w(u):=\sum_{v\in V} w_{\{u, v\}}$. The path measure
	$\mu_W$ in \cite[Section 3.2]{MNS21} should be modified using the (non-lazy) random walk on $(G, w)$ with initial distribution $\pi(u)/\pi(W)$, $u\in W$. All the claims in \cite[Section 3]{MNS21} have identified the constants explicitly, so we only need to identify the constants in the last step.
	
	Conditioned on the event  in \eqref{eq:varphibd} for the loop erased random walk path $\varphi$, we apply Theorem 3.1 in \cite{MNS21} to the set
	$W=\varphi$ with constants $A=2\beta^{-3}$ and $\chi=C_0\beta^4$, where $\beta$ and $\eps$ are chosen as in \eqref{eq:varphibd}. To have the same $\eps$
	for the probability bound in Theorem 3.1, going through its proof in \cite[page 293]{MNS21} shows that in Theorem 3.1, we can choose
	\begin{align*}
		C := C(D, \theta, \alpha, \epsilon) & =  \frac{2 C_3 A}{\epsilon} \\
		& = 82944 D^4 \mathcal{B}_W(G)^3 \log(192 D \mathcal{B}_W(G)) A \epsilon^{-1} \\
		& \leq 82944 D^4 \Big(\theta + 2D + \frac{36D}{\chi^2}\Big)^3 \log\Big(192 D (\theta + 2D + \frac{36D}{\chi^2})\Big) A \epsilon^{-1}.
	\end{align*}
	As in \cite[page 293]{MNS21}, this then leads to the upper bound on $\mathrm{diam}(\cT_{(G, w)})$,
	\begin{equation*}
		\P\big(\mathrm{diam}(\cT_{(G, w)}) \leq (A + C) \sqrt{n}\big) \geq (1-\eps)^2 \geq 1-2\eps.
	\end{equation*}
	It is easy to check that both $A$ and $C$ can be bounded by a constant multiple of $(D \theta \epsilon^{-1})^{k}$ for some $k>0$. Applying the above bound with $\eps/2$ instead of $\eps$ and choosing a large enough $k$ so that the lower bound on $\mathrm{diam}(\cT_{(G, w)})$ in Theorem \ref{T:Nach} also holds
	then gives Theorem \ref{T:Nach} in its stated form. One feasible choice of $k$ and $\gamma$ in Theorem \ref{T:Nach} is $k=400$ and $\gamma = \alpha/325$,
	although we have not attempted to improve these exponents.
\end{proof}

\section*{Acknowledgements}
R.~Sun is supported by NUS Tier 1 grant A-8001448-00-00. M.~Salvi acknowledges financial support from MUR Departments of Excellence Program MatMod@Tov, CUP E83C23000330006, the MUR Prin project \emph{GRAFIA} and the INdAM group GNAMPA. M.~Salvi also thanks the hospitality of the National University of Singapore where part of this project was carried out.

\bibliographystyle{plain}
\bibliography{RSTRE}

\end{document}